\documentclass[11pt,a4paper]{article}
\usepackage[utf8]{inputenc}
\usepackage[T1]{fontenc}
\usepackage{lmodern}
\usepackage{enumerate}
\usepackage[english]{babel}
\usepackage{amsmath}
\usepackage{amssymb}
\usepackage{amsthm}
\usepackage{nicefrac}
\usepackage{todonotes}
\usepackage{gensymb}
\usepackage{float}
\graphicspath{{figures/}}
\usepackage{subcaption}
\usepackage{placeins}
\usepackage{fullpage}

\pdfinclusioncopyfonts=1

\usepackage[colorlinks=true]{hyperref}
\hypersetup{urlcolor=blue, citecolor=blue, linkcolor=blue}



\newtheorem{thm}{Theorem}[section]
\newtheorem{lem}[thm]{Lemma}
\newtheorem{prop}[thm]{Proposition}
\newtheorem{cor}[thm]{Corollary}
\newtheorem{rem}[thm]{Remark}

\newtheorem{ass}{Assumption}

\theoremstyle{definition}
\newtheorem{definition}[thm]{Definition}

\numberwithin{equation}{section}
\numberwithin{figure}{section}

\newcommand{\norm}[1]{\left \|  #1 \right \|}
\newcommand{\abs}[1]{\left|  #1 \right|}
\DeclareMathOperator*{\argmin}{argmin}

\newcommand{\HS}{\mathop{\mathrm{HS}}}
\newcommand{\Cov}{\mathop{\mathrm{Cov}}}


\newcommand{\bs}{\boldsymbol}

\newcommand{\bfm}{{\bs m}}
\newcommand{\bfn}{{\bs n}}
\newcommand{\bfp}{{\bs p}}

\newcommand{\bfu}{{\bs u}}

\newcommand{\bfx}{{\bs x}}
\newcommand{\bfy}{{\bs y}} 
\newcommand{\bfz}{{\bs z}}

\newcommand{\bfT}{{\bs T}}

\newcommand{\field}[1]{{\mathbb{#1}}}
\newcommand{\C}{\field{C}}
\newcommand{\N}{\field{N}}
\newcommand{\R}{\field{R}}

\newcommand{\E}{\mathbb{E}}

\newcommand{\M}{\mathbb{M}}

\newcommand{\Acal}{\mathcal{A}}

\newcommand{\Ccal}{\mathcal{C}}

\newcommand{\Hcal}{\mathcal{H}}
\newcommand{\Ical}{\mathcal{I}}

\newcommand{\Sd}{{S^{d-1}}}
\newcommand{\Rd}{{\R^d}}

\newcommand{\Rdd}{\R^{d\times d}}

\newcommand{\ds}{\, \dif s} 

\newcommand{\dx}{\, \dif \bfx}
\newcommand{\dy}{\, \dif \bfy} 
\newcommand{\dz}{\, \dif \bfz}

\newcommand{\rmi}{\mathrm{i}} 

\newcommand{\eps}{\varepsilon}
\newcommand{\di}{\partial}

\newcommand{\ph}{\,\cdot\,}

\newcommand{\normm}{\abs{\bfm}}
\newcommand{\tm}{\subset}
\newcommand{\Llra}{\Longleftrightarrow}

\newcommand{\ol}{\overline}
\newcommand{\ul}{\underline}

\newcommand{\loc}{{\mathrm{loc}}}

\newcommand{\gvec}{\ul{g}}
\newcommand{\pvec}{\ul{p}}
\newcommand{\wvec}{\ul{w}}

\newcommand{\Cmat}{\ul{C}}

\newcommand{\npos}{M_0}

\newcommand{\Ifac}{\ul{\Ical}^{\mathrm{fac}}}
\newcommand{\Icap}{\ul{\Ical}^{\mathrm{Cap}}}
\newcommand{\Icbf}{\ul{\Ical}^{\mathrm{cbf}}}
\newcommand{\Icbfdr}{\ul{\Ical}^{\mathrm{cbf+dr}}}

\newcommand{\vhat}{\widehat{v}}

\DeclareMathOperator{\diag}{diag}
\DeclareMathOperator{\supp}{supp}
\DeclareMathOperator{\dif}{d\!}
\DeclareMathOperator{\ran}{ran}

\DeclareMathOperator*{\essinf}{ess\,inf}

\DeclareMathOperator{\var}{Var}

\DeclareMathOperator{\inn}{inn}
\DeclareMathOperator{\out}{out}


\author{
  Roland Griesmaier\footnote{Institut f\"ur
    Angewandte und Numerische Mathematik, 
    Karlsruher Institut f\"ur Technologie, Englerstr.~2,
    76131 Karlsruhe, Germany (\nolinkurl{roland.griesmaier@kit.edu})}
  \ and
  Hans-Georg Raumer\thanks{Institut für Aerodynamik und
    Strömungstechnik, Deutsches Zentrum für Luft- und Raumfahrt (DLR),
    Bunsenstraße 10, 37073 Göttingen, Germany, 
    (\nolinkurl{hans-georg.raumer@dlr.de})} \footnote{Corresponding author.}
}

\title{The factorization method and Capon’s method for random source
  identification in experimental aeroacoustics} 

\begin{document}

\maketitle

\begin{abstract}
  Experimental aeroacoustics is concerned with the estimation of
  acoustic source power distributions, which are for instance caused
  by fluid structure interactions on scaled aircraft models inside a
  wind tunnel, from microphone array measurements of associated
  sound pressure fluctuations. 
  In the frequency domain aeroacoustic sound propagation can be
  modelled as a random source problem for a convected Helmholtz
  equation. 
  This article is concerned with the inverse random source problem to
  recover the support of an uncorrelated aeroacoustic source from
  correlations of observed pressure signals. 
  We show that a variant of the factorization method from inverse
  scattering theory can be used for this purpose. 
  We also discuss a surprising relation between the factorization
  method and a commonly used beamforming algorithm from experimental
  aeroacoustics, which is known as Capon's method or as the minimum
  variance method.  
  Numerical examples illustrate our theoretical findings. 
\end{abstract}

{\small\noindent
  Mathematics subject classifications (MSC2010):
  35R30,  
  (65N21)  
  \\\noindent 
  Keywords: aeroacoustics, correlation data, inverse source problem,
  factorization method 
  \\\noindent
  Short title: Random source identification in aeroacoustics
}

\section{Introduction}
\label{sec:Introduction}
In experimental aeroacoustic testing a solid object (e.g., a model of
an aircraft component) is placed inside a wind tunnel, and the fluid
structure interaction between the flow field and the object generates
sound pressure fluctuations, i.e., aeroacoustic noise.
The raw acoustic time signal is recorded by an array of microphones and
further post-processed to obtain correlation data in the frequency
domain. 
Based on these correlation data one then seeks to localize and
quantify the power distribution of the aeroacoustic sources (see,
e.g., \cite{BilKin76,Merino2019,Underbrink2002}). 

In this work we restrict the discussion to subsonic homogeneous
unidirectional flow fields in free space, and we use the convected
Helmholtz equation to model the propagation of time-harmonic
aeroacoustic pressure waves. 
These waves and the associated sources are usually considered as
random functions. 
Following \cite{Hohage2020} we model the aeroacoustic acoustic
pressure signal as a Hilbert space process with zero mean and a
covariance operator that acts as a multiplication operator. 
The inverse source problem then amounts to reconstructing the source
power function from the corresponding covariance operator of the
aeroacoustic pressure signal on the microphone array.
In practise the latter can be estimated from microphone array
measurements by Welch's method \cite{Welch1967}.
In our analysis we assume that the covariance operator corresponding
to an idealized continuum model for the microphone array is available.  
It has been shown in \cite{Hohage2020} that this inverse random source 
problem has a unique solution. 
For further results on inverse random source problems for
time-harmonic acoustic waves, which are not directly related to
aeroacoustic imaging, we refer, e.g., to
\cite{BaoChePei16,BaoChoLiZho14,Dev79,LiHelLi20}. 

Various reconstruction procedures have been discussed for correlation
based random source identification in aeroacoustics. 
Covariance fitting (see, e.g., \cite{Blacodon2004,Yardibi2008})
estimates source powers directly from correlation data of the
observed acoustic random pressure signal by minimizing a suitably
regularized output least squares functional. 
A faster and therefore more popular reconstruction technique in
experimental aeroacoustics is beamforming (see, e.g.,
\cite{Borcea11,Cox1987,Raumer2021,Shan1985,Van88}). 
Instead of solving the inverse source problem for all source positions
at once, beamforming estimates the source powers at individual source
positions separately. 
In particular DAMAS~\cite{Brooks2006} and CLEAN-SC~\cite{Sijtsma2007},
which combine beamforming methods with suitable postprocessing schemes
to improve the spatial resolution of the reconstruction, have become
standard tools. 
Both, covariance fitting and beamforming, have recently been
reviewed from a continuous perspective in \cite{Hohage2020}.

In this work we focus on the localization of extended aeroacoustic
source power functions. 
We show that a variant of the factorization method from inverse
scattering theory can be used to recover the support of a random
source from correlations of aeroacoustic pressure fluctuations.  
The factorization method has been introduced in the framework of
inverse obstacle scattering~\cite{Kirsch1998} and inverse
medium scattering~\cite{Kirsch1999} by Kirsch.
It has subsequently been attracting a considerable amount of attention
over the past twenty-five years. 
We will show that the mathematical structure of the covariance
operator of the aeroacoustic pressure signal is closely related to the
structure of the Born approximation of the far field operator for the
inverse medium scattering problem (see, e.g., \cite{Kirsch2017}). 
This will be used to establish the theoretical foundation of the
factorization method for the aeroacoustic inverse source problem.
On the other hand, we will see that the inf-criterion of the
factorization method and also the traditional imaging functional that
is obtained from Picard's criterion (see, e.g,~\cite{Kirsch2008}) is
closely related to another well-established beamforming algorithm that
is known as Capon's method or as the minimum variance method (see,
e.g., \cite{Capon1969,Li2003,Lorenz2005}). 
In particular, our results give a mathematically rigorous theoretical
interpretation of the reconstructions obtained by Capons's method.
We show for the first time that (for our idealized measurement setup
and in the absence of measurement errors) Capon's method recovers the
correct support of locally strictly positive source power functions. 

This article is organized as follows.
In Section~\ref{sec:aeroacoustic_ip} we briefly recall some basic
facts on solutions to the convected Helmholtz equation, and we
introduce the stochastic model for the aeroacoustic source problem
with uncorrelated extended sources. 
In Section~\ref{sec:factorization_aeroacoustics} we establish the main
result of this work, which is a theoretical justification of the
factorization method for reconstructing the support of the source
power function from the covariance operator corresponding to the
radiated sound pressure fluctuations.
In Section~\ref{sec:CaponsMethod} we discuss the relation between
Capon's method and the factorization method.
Some numerical results on experimental data are presented in
Section~\ref{sec:numerical_examples}.

\section{The aeroacoustic inverse source problem}
\label{sec:aeroacoustic_ip}
Let $\M\subset \Sigma_0 := \{ \bfx\in\Rd \;:\; x_d = 0\}$ and 
$\Omega \subset \R^d_+ := \{ \bfx\in\Rd \;:\; x_d > 0\}$, $d=2,3$, be
relatively open domains such that 
$\ol{\Omega}\cap \Sigma_0 = \emptyset$.
In the following $\M$ represents an idealized~$(d-1)$-dimensional
continuous measurement array, and $\Omega$ is supposed to be a region
in space that contains all possible aeroacoustic sources.

\subsection{The convected Helmholtz equation}
The basic sound propagation model that is used in experimental 
aeroacoustics to describe time-harmonic sound waves inside a 
subsonic homogeneous flow field $\bfu \in \Rd$ is the convected
Helmholtz equation.
Given a \emph{source term} $Q \in L^2(\Omega)$, the associated
\emph{sound pressure field} $p$ satisfies 
\begin{equation}
  \label{eq:ConvectedHelmholtz}
  \Delta p + (k + \rmi \bfm \cdot \nabla)^2p 
  \,=\,  -Q  \qquad \text{in } \Rd \,,
\end{equation}
where $k := \omega / c$ is the \emph{wave number}, $
\omega$ the \emph{frequency}, and $c$ the \emph{speed of sound}.
Here, subsonic means that the \emph{Mach vector} $\bfm := \bfu / c$
satisfies $\normm < 1$.
In the following we also use the
notation~$\beta := \sqrt{1 - \normm^2}$. 
Throughout, $|\ph|$ denotes the Euclidean norm on $\Rd$. 

We will assume that the \emph{convective field} $\bfu$ is aligned with
the $x_1$-direction, i.e., that 
\begin{equation*}
  \bfm
  \,=\, \left(m_1, 0, \dots, 0 \right)^{\top}
  \qquad \text{for some } m_1 = \normm \in [0,1) \,.
\end{equation*}
Solutions of the convected Helmholtz equation are linked to
solutions of the standard Helmholtz equation (i.e.,
\eqref{eq:ConvectedHelmholtz} with $\bfm\equiv0$) by the Lorentz
transformation.

\begin{prop}
  \label{prop:lorentztrafo}
  Let $\bfT := \diag\left(1/\beta, 1, \ldots, 1 \right)\in \Rdd$, 
  suppose that $U\subset \Rd$ is open, and let~$Q\in L^2(U)$. 
  Then $w_{\bfm} \in H^1(U)$ is a weak solution to the convected
  Helmholtz equation
  \begin{equation*}
    \Delta w_{\bfm} + (k + \rmi \bfm \cdot \nabla)^2 w_{\bfm} 
    \,=\,  -Q  \qquad \text{in } U
  \end{equation*}
  if and only if 
  \begin{equation*}
    w_{0}(\bfx)
    \,:=\, \exp\left( \frac{\normm \rmi k}{\beta} x_1 \right)
    w_{\bfm}\left( \bfT^{-1} \bfx  \right) \,,
    \qquad \bfx\in \bfT(U) \,, 
  \end{equation*}
  satisfies 
  \begin{equation*}
    \Delta w_0(\bfx) +  \frac{k^2}{\beta^2}w_0(\bfx)
    \,=\, -\exp\left( \frac{\normm\rmi k}{\beta} x_1 \right)
    Q(\bfT^{-1}\bfx) \,,
    \qquad \bfx\in\bfT(U)  \,,
  \end{equation*}
  i.e., $w_0$ is a weak solution to a standard Helmholtz equation with
  wavenumber $k/\beta$. 
\end{prop}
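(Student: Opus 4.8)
The plan is to reduce the equivalence to a single pointwise operator identity obtained from the chain rule, and then to verify that identity in the weak sense.

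First I would rewrite the convected operator using $\bfm = (m_1,0,\dots,0)^{\top}$ with $m_1 = \normm$, so that $\bfm\cdot\nabla = \normm\,\partial_1$. Expanding $(k+\rmi\normm\,\partial_1)^2 = k^2 + 2\rmi k\normm\,\partial_1 - \normm^2\partial_1^2$ and absorbing $-\normm^2\partial_1^2$ into the $\partial_1^2$ part of $\Delta$ (using $1-\normm^2 = \beta^2$) gives
\begin{equation*}
  \Delta + (k+\rmi\bfm\cdot\nabla)^2
  = \beta^2\partial_1^2 + \sum_{j=2}^{d}\partial_j^2 + 2\rmi k\normm\,\partial_1 + k^2 \,.
\end{equation*}

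Next I would insert the change of variables $\bfy = \bfT^{-1}\bfx$, under which $y_1 = \beta x_1$ and $y_j = x_j$ for $j\ge 2$, together with the smooth gauge factor $\phi(x_1) := \exp(\normm\rmi k x_1/\beta)$, and compute $\partial_1^2 w_0 = \phi'' v + 2\phi'\partial_1 v + \phi\,\partial_1^2 v$ for $w_0 = \phi\,v$ with $v := w_\bfm\circ\bfT^{-1}$. The transverse derivatives $j\ge 2$ pass through unchanged. In the $x_1$-direction three effects appear: the factor $\beta$ in $\partial_1 v = \beta\,(\partial_1 w_\bfm)\circ\bfT^{-1}$ restores the coefficient $\beta^2$ in front of $\partial_1^2 w_\bfm$; the cross term $2\phi'\partial_1 v$ reproduces precisely the convective first-order term $2\rmi k\normm\,\partial_1$; and $\phi''/\phi = -\normm^2 k^2/\beta^2$ combines with the added potential $k^2/\beta^2$ to give $(1-\normm^2)k^2/\beta^2 = k^2$. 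In short, the exponential gauge is designed to complete the square and absorb the first-order convective term.

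Collecting these contributions yields the pointwise identity
\begin{equation*}
  \Bigl[\Delta w_0 + \tfrac{k^2}{\beta^2}w_0\Bigr](\bfx)
  = \phi(x_1)\,\bigl[\Delta w_\bfm + (k+\rmi\bfm\cdot\nabla)^2 w_\bfm\bigr](\bfT^{-1}\bfx) \,,
\end{equation*}
valid for every $w_\bfm\in H^1(U)$ and $\bfx\in\bfT(U)$. Because $\phi$ is nowhere zero and $\bfT^{-1}$ maps $\bfT(U)$ bijectively onto $U$, this single identity yields both directions of the equivalence at once: $w_\bfm$ solves the convected equation with source $-Q$ precisely when $w_0$ solves the standard Helmholtz equation with right-hand side $-\phi(x_1)Q(\bfT^{-1}\bfx)$. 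The main obstacle is regularity rather than algebra: since $w_\bfm$ lies only in $H^1(U)$, the second derivatives above do not exist classically and the computation must be read distributionally. I would justify this by noting that the linear diffeomorphism $\bfT$ and multiplication by the smooth nonvanishing factor $\phi$ both preserve $H^1$, so that $w_0\in H^1(\bfT(U))$, and the operator identity then holds as an identity of distributions --- equivalently, it can be obtained by transforming test functions in the weak formulation, where the formal chain-rule manipulation is rigorous.
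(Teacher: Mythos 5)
Your proof is correct and is exactly the ``direct calculation'' that the paper's one-line proof alludes to: the expansion of $(k+\rmi\bfm\cdot\nabla)^2$, the scaling $y_1=\beta x_1$, and the gauge factor $\exp(\normm\rmi k x_1/\beta)$ combine precisely as you describe, and your remark that $\bfT$ and the smooth nonvanishing multiplier preserve $H^1$ so that the identity can be read weakly is the right way to make it rigorous. No gaps.
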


\begin{proof}
  This may be verified by direct calculation. 
\end{proof}

Using Proposition~\ref{prop:lorentztrafo}, the Sommerfeld radiation
condition, which determines outgoing solutions to the standard
Helmholtz equation on unbounded domains (see, e.g.,
\cite[p.~18]{Colton2019}), can be transferred to the convective
Helmholtz equation. 
Let $U \subset \Rd$ be a bounded domain, and let~$p \in
C^2(\Rd\setminus \ol{U})$ be a solution to 
\begin{equation*}
  \Delta p  + (k + \rmi \bfm \cdot \nabla)^2p 
  \,=\,  0 \qquad \text{in } \Rd\setminus\ol{U} \,.
\end{equation*}
Then we call $p$ \emph{radiating} if it satisfies the radiation
condition 
\begin{equation*}
  \lim_{r\to\infty} r^{\frac{d-1}{2}}
  \left( \left(\frac{\di}{\di r}
      - \rmi \frac{k}{\beta}\right)
    \exp\left( \frac{\normm \rmi k}{\beta} x_1 \right)
    p \left( \bfT^{-1} \bfx  \right) \right)
  \,=\, 0 \,, \qquad r=\abs{\bfx} \,, 
\end{equation*}
uniformly with respect to all directions $\bfx/\abs{\bfx} \in \Sd$.

Similarly, using Proposition~\ref{prop:lorentztrafo} the fundamental
solution of the convected Helmholtz equation can be obtained from the
fundamental solution for the standard Helmholtz equation (see, e.g.,
\cite[p.~19 and p.~89]{Colton2019}). 
To simplify the notation, we define the \emph{Mach norm} on $\Rd$ by 
\begin{equation*}
  \abs{\bfx}_{\bfm}
  \,:=\, \sqrt{(\bfx \cdot \bfm)^2 + \beta^2 \abs{\bfx}^2} \,,
  \qquad \bfx\in\Rd \,.
\end{equation*}
Therewith, the fundamental solution of the convected Helmholtz
equation is given by
\begin{equation}
  \label{eq:greensfunc}
  g(\bfx,\bfy)
  \,:=\, \exp\left(-\frac{\rmi k}{\beta^2} (\bfx-\bfy)\cdot\bfm\right)
  \cdot 
  \begin{cases}
    \frac{i}{4 \beta} H^{(1)}_0
    \left( \frac{k}{\beta^2} \abs{\bfx-\bfy}_{\bfm} \right)
    & \text{if } d=2 \,,\\[0.5em]
    \frac{1}{4\pi \abs{\bfx-\bfy}_{\bfm}}
    \exp\left(\frac{\rmi k}{\beta^2}\abs{\bfx-\bfy}_{\bfm}\right)
    & \text{if } d=3 \,,
  \end{cases}  
\end{equation}
for $\bfx,\bfy\in\Rd$, $\bfx\not=\bfy$. 
As usual, $H^{(1)}_0$ denotes the Hankel function of the first kind of 
order zero.
For later reference, we note that
\begin{equation}
  \label{eq:green_upperbnd}
  \abs{g(\bfx, \bfy)}
  \,\leq\, C(d) \abs{\bfx- \bfy}^{\frac{1-d}{2}}
  \qquad \text{for } \bfx,\bfy\in\Rd \,,\; \bfx \neq \bfy \,,
\end{equation}
with a constant $C(d)$ that depends only on the spatial dimension $d$.
Using the norm equivalence of $\abs{\ph}$ and $\abs{\ph}_{\bfm}$ on
$\Rd$, this bound follows directly from \eqref{eq:greensfunc} when
$d=3$, while for $d=2$ one uses the asymptotic behavior of
the Bessel functions (see, e.g., \cite[pp.~89--90]{Colton2019}). 

\begin{lem}
  \label{lem:WellPosedness}
  Let $Q\in L^2(\Omega)$.
  Then, the unique radiating solution $p\in H^1_\loc(\Rd)$ of
  \eqref{eq:ConvectedHelmholtz} is given by
  \begin{equation*}
    p(\bfx)
    \,=\, \int_\Omega Q(\bfy) g(\bfx,\bfy) \dy \,,
    \qquad \bfx\in\Rd \,.
  \end{equation*}
  Furthermore, $\bfp$ is real analytic in $\Rd\setminus\ol{\Omega}$. 
\end{lem}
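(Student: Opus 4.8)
The plan is to transport the classical free-space well-posedness theory for the Helmholtz equation to the convected setting by means of the Lorentz transformation of Proposition~\ref{prop:lorentztrafo}, and then to identify the resulting representation kernel with the Green's function $g$ of \eqref{eq:greensfunc}. Throughout I abbreviate $\kappa := k/\beta$ and denote by $\Phi_\kappa$ the outgoing fundamental solution of the standard Helmholtz operator $\Delta + \kappa^2$. First I would reduce to the standard equation: given $Q \in L^2(\Omega)$, set $\widetilde{Q}(\bfx) := \exp(\frac{\normm\rmi k}{\beta}x_1)\,Q(\bfT^{-1}\bfx)$, a source in $L^2(\bfT(\Omega))$. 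By Proposition~\ref{prop:lorentztrafo} a function $p = w_{\bfm} \in H^1_\loc(\Rd)$ solves \eqref{eq:ConvectedHelmholtz} if and only if its Lorentz transform $w_0$ solves $\Delta w_0 + \kappa^2 w_0 = -\widetilde{Q}$; moreover, comparing with the definition of the convected radiation condition above, one sees that it is, verbatim, the Sommerfeld radiation condition for $w_0$ with wavenumber $\kappa$, so $p$ is radiating if and only if $w_0$ is. The classical volume-potential theory (see, e.g., \cite{Colton2019}) now yields a unique radiating solution $w_0 \in H^1_\loc(\Rd)$, given by $w_0(\bfx) = \int_{\bfT(\Omega)}\widetilde{Q}(\bfz)\,\Phi_\kappa(\bfx,\bfz)\,\dz$. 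Pulling this back through $\bfT$ gives existence and uniqueness of the radiating solution of \eqref{eq:ConvectedHelmholtz}; for uniqueness one notes that the difference of two such solutions transforms to a radiating $H^1_\loc$ solution of the homogeneous Helmholtz equation, which vanishes by Rellich's lemma and unique continuation.

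Next I would make the representation explicit. Inserting the volume potential into the inverse transform $p(\bfx) = \exp(-\frac{\normm\rmi k}{\beta^2}x_1)\,w_0(\bfT\bfx)$ and substituting $\bfz = \bfT\bfy$ (Jacobian $\det\bfT = 1/\beta$, and $\widetilde{Q}(\bfT\bfy) = \exp(\frac{\normm\rmi k}{\beta^2}y_1)Q(\bfy)$) gives
\[
  p(\bfx) \,=\, \int_\Omega Q(\bfy)\,\frac1\beta
  \exp\!\left(-\tfrac{\rmi k}{\beta^2}(\bfx-\bfy)\cdot\bfm\right)
  \Phi_\kappa(\bfT\bfx,\bfT\bfy)\,\dy \,.
\]
It then remains to check that the kernel equals $g$. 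The algebraic heart of this is the identity $\abs{\bfT\bfv} = \abs{\bfv}_{\bfm}/\beta$ for all $\bfv \in \Rd$, which follows from $\beta^2 = 1-\normm^2$ since $\abs{\bfT\bfv}^2 = \abs{\bfv}^2 + (\beta^{-2}-1)v_1^2 = \beta^{-2}(\beta^2\abs{\bfv}^2 + \normm^2 v_1^2) = \beta^{-2}\abs{\bfv}_{\bfm}^2$. With $\bfv = \bfx-\bfy$ this turns $\kappa\abs{\bfT(\bfx-\bfy)}$ into $\frac{k}{\beta^2}\abs{\bfx-\bfy}_{\bfm}$ and $\frac1\beta\,\frac{1}{4\pi\abs{\bfT(\bfx-\bfy)}}$ into $\frac{1}{4\pi\abs{\bfx-\bfy}_{\bfm}}$, reproducing \eqref{eq:greensfunc} for $d=3$; for $d=2$ the same scaling enters the argument of $H^{(1)}_0$, and the prefactor $\frac{\rmi}{4\beta}$ comes out as $\frac1\beta\cdot\frac{\rmi}{4}$ after accounting for the Jacobian. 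The remaining phase factor $\exp(-\frac{\rmi k}{\beta^2}(\bfx-\bfy)\cdot\bfm)$ matches directly.

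For real analyticity on $\Rd\setminus\ol{\Omega}$ I would argue from the representation. For each fixed $\bfy$ the kernel $g(\ph,\bfy)$ is real analytic on $\Rd\setminus\{\bfy\}$, and for $\bfx$ ranging over a neighborhood $V$ of any $\bfx_0\in\Rd\setminus\ol{\Omega}$ with $\mathrm{dist}(V,\Omega)>0$ it extends holomorphically in the complexified spatial variable with complex derivatives bounded uniformly in $\bfy\in\Omega$, hence integrable against $Q\in L^2(\Omega)$; differentiating under the integral then shows that $p$ is real analytic near $\bfx_0$. Equivalently, since $\supp Q\subset\Omega$, on $\Rd\setminus\ol{\Omega}$ the function $p$ solves the homogeneous convected Helmholtz equation, whose principal part $\beta^2\di_1^2 + \di_2^2 + \dots + \di_d^2$ is elliptic with constant coefficients, so analytic hypoellipticity applies.

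The main obstacle is purely computational: carrying the Lorentz prefactors and the Jacobian $1/\beta$ consistently through the change of variables, and verifying the norm identity $\abs{\bfT\bfv} = \abs{\bfv}_{\bfm}/\beta$ that collapses the transported Helmholtz fundamental solution onto $g$. Once this identity is secured, existence, uniqueness and the explicit formula are immediate consequences of the classical Helmholtz theory, and, as noted, the radiation condition needs no separate treatment, having been defined precisely as the pullback of the Sommerfeld condition.
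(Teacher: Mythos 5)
Your argument is correct and follows essentially the same route as the paper's proof: reduction to the standard Helmholtz equation via the Lorentz transformation of Proposition~\ref{prop:lorentztrafo}, classical volume-potential theory plus Rellich's lemma for existence and uniqueness, and ellipticity/analyticity of the kernel for the real analyticity away from $\ol{\Omega}$. The only difference is that you work out explicitly the change of variables and the identity $\abs{\bfT\bfv} = \abs{\bfv}_{\bfm}/\beta$ that identifies the pulled-back kernel with $g$ from \eqref{eq:greensfunc}, details the paper delegates to citations.
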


\begin{proof}
  This follows from the one-to-one correspondence between radiating
  solutions to the standard Helmholtz equation and radiating
  solutions to the convected Helmholtz equation by means of the
  Lorentz transformation.
  The existence and uniqueness of radiating solutions to the
  corresponding source problem for the standard Helmholtz equation
  follows from Rellich's lemma (see, e.g.,
  \cite[Lmm~2.12]{Colton2019}) and the properties of the volume
  potential (see, e.g., \cite[Thms.~8.1--8.2]{Colton2019}).
  The real analyticity of $\bfp$ in $\Rd\setminus\ol{\Omega}$ follows
  from \cite[Prop.~3.4]{Hohage2020}. 
\end{proof}

The next proposition gives an integration by parts formula that is a
consequence of Green's second theorem (see, e.g.,
\cite[p.~19]{Colton2019}).
A complete proof can be found in Appendix~A of \cite{RaumerPhD}. 
\begin{prop}
  \label{prop:intbyparts}
  Let $U\subset \Rd$ be a bounded domain of class $C^1$ and let
  $\bfn = (n_1, \dots, n_d)^{\top}$ denote the unit outward normal
  vector on the boundary $\di U$.
  Then, for $p,w \in C^2(\ol{U})$ we have
  \begin{equation}
    \label{eq:intbyparts}
    \begin{split}
      &\int_U p(\bfy) \left(
        \Delta w (\bfy) + (k + \rmi \bfm \cdot \nabla)^2w(\bfy)
      \right) \dy
      - \int_U \left(
        \Delta p (\bfy) + (k - \rmi \bfm \cdot \nabla)^2p(\bfy)
      \right) w(\bfy) \dy \\
      &\phantom{\,=\,}
      + \oint_{\di U}  \left( p(\bfy) \frac{\di w}{\di\bfn} (\bfy)
        - w(\bfy) \frac{\di p}{\di\bfn}(\bfy) \right) \ds(\bfy)
      + 2\rmi k \normm \oint_{\di U} p(\bfy) w(\bfy) n_1(\bfy) \ds(\bfy)\\
      &\phantom{\,=\,}
      + \normm^2 \oint_{\di U} \left(
        w(\bfy) \frac{\di p}{\di y_1}(\bfy) n_1(\bfy)
        - p(\bfy) \frac{\di w}{\di y_1}(\bfy) n_1(\bfy) \right) \ds(\bfy) \,. 
    \end{split}
  \end{equation}
\end{prop}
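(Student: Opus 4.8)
The plan is to reduce the formula \eqref{eq:intbyparts} to the classical Green's second theorem by writing the two convected operators out explicitly and then collecting the resulting terms according to their order of differentiation. Since the Mach vector points along the first coordinate axis, $\bfm\cdot\nabla = \normm\,\di_{y_1}$, and therefore
\begin{equation*}
  (k \pm \rmi\,\bfm\cdot\nabla)^2
  \,=\, k^2 \pm 2\rmi k\normm\,\di_{y_1} - \normm^2\,\di_{y_1}^2 \,.
\end{equation*}
Substituting these two expansions into the two volume integrals and using linearity, the combined integrand $p\bigl(\Delta w + (k+\rmi\bfm\cdot\nabla)^2 w\bigr) - \bigl(\Delta p + (k-\rmi\bfm\cdot\nabla)^2 p\bigr)w$ splits into four pieces: a pure Laplacian part $p\,\Delta w - w\,\Delta p$, a zeroth-order part $k^2 pw - k^2 pw$ that cancels identically, a first-order part $2\rmi k\normm\,(p\,\di_{y_1}w + w\,\di_{y_1}p)$, and a second-order part $-\normm^2(p\,\di_{y_1}^2 w - w\,\di_{y_1}^2 p)$.

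Each piece is then treated separately. For the Laplacian part I would invoke Green's second theorem, which turns $\int_U (p\,\Delta w - w\,\Delta p)\dy$ into the boundary integral $\oint_{\di U}\bigl(p\,\di w/\di\bfn - w\,\di p/\di\bfn\bigr)\ds$, that is, the first boundary term in \eqref{eq:intbyparts}. For the first-order part I would note that $p\,\di_{y_1}w + w\,\di_{y_1}p = \di_{y_1}(pw)$ is a perfect derivative, so that the divergence theorem converts its volume integral into $\oint_{\di U} pw\,n_1\,\ds$, producing the contribution carrying the factor $2\rmi k\normm$.

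The step that needs the most care, and which I expect to be the only real obstacle, is the second-order convective part. Here I would use the identity $p\,\di_{y_1}^2 w - w\,\di_{y_1}^2 p = \di_{y_1}(p\,\di_{y_1}w - w\,\di_{y_1}p)$, in which the cross terms $\di_{y_1}p\,\di_{y_1}w$ cancel so that once again only a perfect first-order derivative remains. Applying the divergence theorem in the $y_1$-direction then yields $\normm^2\oint_{\di U}\bigl(w\,\di p/\di y_1 - p\,\di w/\di y_1\bigr)n_1\,\ds$, which is exactly the antisymmetric boundary term on the last line of \eqref{eq:intbyparts}; keeping track of this sign and of the orientation of the antisymmetric combination is the only genuinely delicate bookkeeping.

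Finally, I would verify that the regularity assumptions suffice for every integration by parts: $p,w\in C^2(\ol{U})$ together with a boundary $\di U$ of class $C^1$ are precisely what Green's second theorem and the divergence theorem require. Assembling the four contributions then reproduces the three boundary integrals of \eqref{eq:intbyparts} and establishes the formula.
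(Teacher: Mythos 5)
Your proof is correct and takes the route the paper itself indicates (the paper defers the computation to Appendix~A of the cited thesis, describing the identity as a consequence of Green's second theorem): expanding $(k\pm\rmi\,\bfm\cdot\nabla)^2=k^2\pm 2\rmi k\normm\,\di_{y_1}-\normm^2\di_{y_1}^2$, applying Green's second identity to the Laplacian part and the divergence theorem to the two convective parts, and your signs on all three boundary contributions check out. As a side benefit, your derivation pins down the equality that the display \eqref{eq:intbyparts} omits (it is missing its ``$=$'' sign): the difference of the two volume integrals equals the sum of the three boundary integrals, which is precisely the form in which the proposition is used in the proof of Theorem~\ref{thm:shape_characterization}.
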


Finally, we transfer the Helmholtz representation formula for
radiating solutions of the standard Helmholtz equation (see, e.g.,
\cite[Thm.~2.5]{Colton2019}) to radiating solutions of the convected
Helmholtz equation.
Again a complete proof, which employs
Proposition~\ref{prop:lorentztrafo}, can be found in Appendix~A of
\cite{RaumerPhD}. 

\begin{prop}
  \label{prop:representation_formula}
  Suppose that $U\subset \Rd$ is the open complement of an unbounded
  domain of class~$C^2$ and let $\bfn = (n_1, \dots, n_d)^{\top}$
  denote the unit outward normal vector on the boundary $\di U$. 
  Let $p \in C^2(\Rd\setminus\ol{U})\cap C^1(\Rd\setminus U)$ be
  a radiating solution to
  \begin{equation*}
    \Delta p  + (k + \rmi \bfm \cdot \nabla)^2p 
    \,=\,  0 \qquad \text{in } \Rd\setminus\ol{U} \,.
  \end{equation*}
  Then, for any $\bfx \in \Rd \setminus \ol{U}$, we have
  \begin{equation}
    \label{eq:representation_formula}
    \begin{split}
      p(\bfx) 
      &\,=\, \oint_{\di U} \left(
        p({\bfy}) \frac{\di g(\bfx,\bfy)}{\di\bfn(\bfy)}
        - g(\bfx,\bfy) \frac{\di p}{\di\bfn}(\bfy) \right)
      \cdot \bfn \ds({\bfy}) \\
      &\phantom{\,=\,}
      + \normm ^2 \oint_{\di U} \left(
        g(\bfx,\bfy) \frac{\di p}{\di y_1}(\bfy)
        - p(\bfy) \frac{\di g(\bfx,\bfy)}{\di y_1} \right)
      n_1(\bfy) \ds(\bfy) \\
      &\phantom{\,=\,}
      - 2 \normm \rmi k \oint_{\di U} p(\bfy)g(\bfx,\bfy) n_1(\bfy)
      \ds(\bfy) \,. 
    \end{split}
  \end{equation}
\end{prop}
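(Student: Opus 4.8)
The plan is to read the asserted identity as the convected analogue of Green's representation theorem and to derive it from the convected Green's second identity of Proposition~\ref{prop:intbyparts}, applied to the pair $(p, g(\bfx,\cdot))$ on a truncated and punctured exterior domain. Write $\Lcal_{\bfm} := \Delta + (k + \rmi\bfm\cdot\nabla)^2$ for the convected Helmholtz operator and $\Lcal_{-\bfm} := \Delta + (k - \rmi\bfm\cdot\nabla)^2$ for its formal adjoint; these are exactly the two operators appearing in the two volume terms of \eqref{eq:intbyparts}. The key preliminary observation is that, although $g(\bfx,\cdot)$ is by construction a fundamental solution of $\Lcal_{\bfm}$ in its \emph{first} variable, as a function of its \emph{second} variable it is a fundamental solution of the adjoint operator: since $g(\bfx,\bfy) = G(\bfx-\bfy)$ for a kernel $G$ with $\Lcal_{\bfm}G = -\delta_0$, and since $\nabla_{\bfy} = -\nabla_{\bfx}$ turns $k + \rmi\bfm\cdot\nabla_{\bfx}$ into $k - \rmi\bfm\cdot\nabla_{\bfy}$, one finds
\begin{equation*}
  \bigl(\Delta_{\bfy} + (k - \rmi\bfm\cdot\nabla_{\bfy})^2\bigr)\, g(\bfx,\cdot) \,=\, -\delta_{\bfx} \qquad\text{in } \Rd .
\end{equation*}
Hence $g(\bfx,\cdot)$ fits precisely the slot of \eqref{eq:intbyparts} hit by $\Lcal_{-\bfm}$, while the radiating solution $p$ fits the slot hit by $\Lcal_{\bfm}$, so that both volume integrals drop out.

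Concretely, I would fix $\bfx\in\Rd\setminus\ol{U}$ and apply \eqref{eq:intbyparts} with $p$ in the $w$-slot and $g(\bfx,\cdot)$ in the $p$-slot on the truncated punctured domain $V_{R,\eps} := \bigl(B_R(0)\cap(\Rd\setminus\ol U)\bigr)\setminus\ol{B_\eps(\bfx)}$, where $R$ is large enough that $\ol U\cup\{\bfx\}\subset B_R(0)$ and $\eps$ is small (a standard inner approximation of $\di U$ accommodates that $p$ is only $C^1$ up to the boundary). On $V_{R,\eps}$ the first volume integral vanishes because $\Lcal_{\bfm}p = 0$, and the second vanishes because $\Lcal_{-\bfm}g(\bfx,\cdot) = 0$ away from $\bfx\notin V_{R,\eps}$. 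The boundary $\di V_{R,\eps}$ splits into $\di U$, the outer sphere $\di B_R(0)$, and the inner sphere $\di B_\eps(\bfx)$, each with its induced outward normal, and the asserted formula is obtained by passing to the limits $\eps\to0$ and $R\to\infty$.

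The two limits are where the work lies. For $\eps\to0$, the crucial point is that the first boundary group $\oint(p\,\di g/\di\bfn - g\,\di p/\di\bfn)$ of \eqref{eq:representation_formula} together with the $\normm^2$-group assemble into the conormal derivative $\di/\di\bfn - \normm^2 n_1\,\di/\di y_1$ belonging to the anisotropic principal part $\beta^2\di_1^2 + \sum_{j\ge2}\di_j^2$ of $\Lcal_{\bfm}$; it is against precisely this conormal derivative that the (anisotropic) singularity of $g$ integrates over $\di B_\eps(\bfx)$ to give exactly $p(\bfx)$, whereas the lower-order term $2\normm\rmi k\oint p\,g\,n_1$ carries one power of $\eps$ less in its singularity and drops out. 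For $R\to\infty$ I must show that the full $\di B_R(0)$ contribution—including the non-standard $n_1$- and $\normm^2$-terms—vanishes; the cleanest route is to transport the integral through the Lorentz transformation of Proposition~\ref{prop:lorentztrafo}, under which $p$ and $g$ correspond to a radiating solution and the fundamental solution of the \emph{standard} Helmholtz equation at wavenumber $k/\beta$, for which the classical Sommerfeld far-field estimate yields the decay; pulling back gives the vanishing of all terms on $\di B_R(0)$.

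I expect the far-field step to be the main obstacle, both because it is where the radiation condition is genuinely used and because the non-standard boundary terms must be controlled simultaneously; reducing it to the classical estimate via Proposition~\ref{prop:lorentztrafo} is the most economical way around this. Collecting the remaining contributions on $\di U$ and correcting the sign for the outward normal (which on $\di V_{R,\eps}$ points into $U$, opposite to the $\bfn$ of the statement) then yields \eqref{eq:representation_formula}. As an alternative, one could dispense with Proposition~\ref{prop:intbyparts} and derive the whole identity by Lorentz-transforming the classical representation formula for $w_0(\bfx) := \exp(\normm\rmi k x_1/\beta)\,p(\bfT^{-1}\bfx)$ and unwinding the change of variables, the extra boundary terms then arising from the anisotropy of $\bfT$ and from the $x_1$-derivative of the exponential prefactor; the Green's-identity route, however, makes the origin of each individual boundary term transparent.
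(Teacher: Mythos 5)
Your proposal is sound. The paper itself does not spell out a proof of Proposition~\ref{prop:representation_formula}; it defers to Appendix~A of \cite{RaumerPhD} and notes only that the argument there ``employs Proposition~\ref{prop:lorentztrafo}'', i.e.\ it transfers the classical Helmholtz representation formula through the Lorentz transformation --- which is precisely the route you sketch as your \emph{alternative} at the end. Your primary route (apply the convected Green identity \eqref{eq:intbyparts} to the pair $(g(\bfx,\cdot),p)$ on a truncated, punctured exterior domain) is a legitimately different and more self-contained derivation, and you correctly identify its two genuinely delicate points. First, your observation that $g(\bfx,\cdot)$ solves the \emph{adjoint} equation $\bigl(\Delta_{\bfy}+(k-\rmi\bfm\cdot\nabla_{\bfy})^2\bigr)g(\bfx,\cdot)=-\delta_{\bfx}$ is exactly right and matches how the paper itself uses \eqref{eq:intbyparts} in the proof of Theorem~\ref{thm:shape_characterization}(a). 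Second, your regrouping of the $\di/\di\bfn$- and $\normm^2$-terms into the conormal derivative $\di/\di\bfn-\normm^2 n_1\,\di/\di y_1$ of the principal part $\beta^2\di_1^2+\sum_{j\ge2}\di_j^2$ is correct (one checks that $1/(4\pi\abs{\bfx-\bfy}_{\bfm})$ is indeed the fundamental solution of that anisotropic Laplacian, since $\abs{\bfz}_{\bfm}^2=z_1^2+\beta^2(z_2^2+\dots+z_d^2)$), so the small-sphere limit produces $p(\bfx)$ via $\oint_{\di B_\eps}\bfn\cdot A\nabla g\,\ds=-1+o(1)$ while the lower-order $n_1$-term is $O(\eps)$ and drops out, as you say. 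The far-field limit is indeed where the radiation condition enters, and reducing it to the classical Sommerfeld estimate via Proposition~\ref{prop:lorentztrafo} is the right move; what each approach buys is clear: the Lorentz-transform-everything route is shorter but obscures where the extra $n_1$- and $\normm^2$-boundary terms come from, whereas your Green's-identity route makes each boundary term's origin transparent at the cost of the anisotropic small-sphere computation. No gap, provided you carry out the orientation bookkeeping (the outward normal of the punctured domain points into $U$ and into $B_\eps(\bfx)$) and the inner approximation of $\di U$ needed because $p$ is only $C^1$ up to the boundary --- both of which you already flag.
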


\subsection{The random source process}
In experimental aeroacoustics sources are usually considered as random
functions. 
Following~\cite{Hohage2020}, we use a Hilbert space process, i.e., a
bounded linear operator
\begin{equation*}
  Q: L^2(\Omega) \to L^2(X, \Acal, \mathbb{P}) \,,
\end{equation*}
where $(X, \Acal, \mathbb{P})$ is the underlying probability space, to
model the source problem. 
Then, the associated \emph{random pressure signal} is given by 
\begin{equation}
  \label{eq:randomsignal}
  p(\bfx)
  \,=\, Q(g(\bfx,\ph)) \,, \qquad \bfx\in\Rd \,,
\end{equation}
where $g$ is the fundamental solution from \eqref{eq:greensfunc}.
Using \eqref{eq:green_upperbnd} we see that $g(\bfx,\ph)$ is square
integrable for any $\bfx\in\M$, and thus \eqref{eq:randomsignal} is
well-defined. 

The \emph{expectation} of $Q$ is the unique element
$\E[Q] \in L^2(\Omega)$ such that 
\begin{equation*}
  \left \langle \E[Q], v \right \rangle_{L^2(\Omega)}
  \,=\, \E\left( Qv  \right) 
  \qquad \text{for all } v \in L^2(\Omega) \,,
\end{equation*}
and the \emph{covariance operator}
$\Cov[Q]: L^2(\Omega) \to L^2(\Omega)$ is the unique
self-adjoint and positive-semidefinite operator that satisfies 
\begin{equation*}
  \left\langle \Cov[Q]\phi_1, \phi_2 \right\rangle_{L^2(\Omega)}
  \,=\, \Cov \left( Q\phi_1, Q\phi_2  \right)
  \qquad \text{for all } \phi_1, \phi_2 \in L^2(\Omega) \,.
\end{equation*}

It is commonly assumed in experimental aeroacoustics that the
random source has zero mean and is spatially uncorrelated.
\begin{ass}
  \label{as:source_process}
  The Hilbert space process $Q$ satisfies 
  \begin{enumerate}[(a)]
  \item $\E[Q] = 0$,
  \item and there is a $q \in L^\infty(\Omega)$, the
    \emph{source power function}, such that $\Cov[Q] = M_q$, where
    $M_q: L^2(\Omega) \to L^2(\Omega)$ denotes the multiplication
    operator given by 
    \begin{equation*}
      (M_qv)(\bfx)
      \,:=\, q(\bfx) v(\bfx) \,,
      \qquad \bfx\in\Omega \,.
    \end{equation*}
  \end{enumerate}
\end{ass}
We note that in the special case when $q\equiv 1$, a process $Q$ that
satisfies Assumption~\ref{as:source_process} is called a white noise
process. 
Since~$\Cov[Q]$ is symmetric and positive-semidefinite, the source
power function~$q$ is real-valued and nonnegative a.e.\ in $\Omega$. 
For any $\bfx\in\M$, the pressure signal~$p(\bfx)$ is a scalar,
complex random variable with $\E[p(\bfx)]=0$, and the correlation
between two observation positions $\bfx, \bfy \in \M$ satisfies
\begin{equation*}
  \begin{split}
    \Cov(p(\bfx), p(\bfy))
    &\,=\, \Cov \left( Q(g(\bfx, \cdot)), Q(g(\bfy, \cdot))\right)
    \,=\, \left \langle \Cov[Q]g(\bfx, \cdot),
      g(\bfy, \cdot) \right \rangle_{L^2(\Omega)} \\
    &\,=\, \left \langle M_q g(\bfx, \cdot),
      g(\bfy, \cdot) \right \rangle_{L^2(\Omega)}
    \,=\, \int_{\Omega} q(\bfz) g(\bfx, \bfz) \ol{g(\bfy, \bfz)} \dz \\
    &\,=:\, c_q(\bfx, \bfy) \,.
  \end{split}
\end{equation*}
Accordingly, the \emph{covariance operator} of the aeroacoustic
pressure signal $\Ccal(q): L^2(\M) \to L^2(\M)$ is given by 
\begin{equation*}
  \left( \Ccal(q) \psi \right)(\bfx)
  \,:=\, \int_{\M} \psi(\bfy) c_q(\bfx, \bfy) \dy \,,
  \qquad \bfx\in\M \,.
\end{equation*}
Using \eqref{eq:green_upperbnd} it follows that, for any
$q\in L^\infty(\Omega)$, the covariance operator $\Ccal(q)$ is a
Hilbert-Schmidt operator (see \cite[Pro.~2.2]{Hohage2020}). 

In experimental aeroacoustics finite dimensional approximations of the 
covariance operator~$\Ccal(q)$ are obtained from microphone array
measurements by estimating the covariance matrix of the microphone
signals. 
This estimation is usually carried out by Welch’s
method~\cite{Welch1967}. 
We are interested in the inverse source problem to reconstruct the
support of the source power function $q\in L^\infty(\Omega)$ from
observations of $\Ccal(q)\in \HS(L^2(\M))$.
We note that in \cite{Hohage2020} it has been established that in fact
even $q$ is uniquely determined by $\Ccal(q)$. 
In this work we will show that a variant of the factorization method
from inverse scattering theory can be utilized to recover the support
of $q$ from $\Ccal(q)$.

\section{The factorization method in aeroacoustic source imaging}
\label{sec:factorization_aeroacoustics}
From now on we let $q\in L^\infty(\Omega)$ with $q\geq 0$ a.e.\ in
$\Omega$ be a fixed source power function, and we denote by $\Ccal(q)$
the associated covariance operator. 
Following \cite{HarUlr13,KusSyl03} we distinguish the support, the
inner support, and the outer support of~$q$. 
These notions will be used in the characterization of the support of
$q$ in terms of $\Ccal(q)$ below. 

\begin{definition}
  \label{dfn:supports}
  Let $q\in L^\infty(\Omega)$ with $q\geq 0$.
  We identify $q$ with its extension to $\Rd$ by zero, and we define
  \begin{enumerate}[(a)]
  \item the \emph{support} $\supp(q)$ of $q$ as the complement of the
    union of all open subsets $U\tm\Rd$ such that
    $q|_U\equiv 0$. 
  \item the \emph{inner support} $\inn\supp(q)$ of $q$ as the union
    of all open subsets $U\tm\Rd$ such that $\essinf q|_U > 0$.
  \item  the \emph{outer support} $\out\supp(q)$ of $q$ as the
    complement of the union of all open, connected and unbounded
    subsets $U\tm\Rd$ such that $q|_U\equiv 0$. 
  \end{enumerate}
\end{definition}

\begin{rem}
  \label{rem:LocStrPos}
  In the following we denote by $D\tm\Rd$ the interior of $\supp(q)$. 
  Following \cite{GebHyv07} we say that the source power function $q$
  is \emph{locally strictly positive} on $D$, if for each $\bfx\in D$
  there exist $\eps_\bfx,r_\bfx>0$ such that
  $B_{r_\bfx}(\bfx) \tm D$ and
  \begin{equation*}
    q(\bfx) > \eps_\bfx \qquad
    \text{for a.e.\ } \bfx \in B_{r_\bfx}(\bfx) \,,
  \end{equation*}
  where $B_{r_\bfx}(\bfx)$ denotes the ball of radius $r_\bfx$
  centered at $\bfx$.
  If this is the case, and if $\Rd\setminus\supp(q)$ is connected,
  then 
  \begin{equation*}
    \ol{\inn\supp(q)} \,=\, \supp(q) \,=\, \out\supp(q) 
  \end{equation*}
  (see \cite[Cor.~2.5]{HarUlr13}).
  In general, the outer support is basically the support plus the
  holes that cannot be connected to infinity. 
\end{rem}

Our goal is to reconstruct $D$ from the covariance operator 
$\Ccal(q)$ under minimal assumptions on $q$. 
The techniques that we use have been developed for time-harmonic
inverse scattering problems in
\cite{Kirsch1998,Kirsch1999,Kirsch2017,Kirsch2008}, and we further
apply ideas that have been proposed in
\cite{BruHanPid01,GebHan05,GebHyv07}. 
We define the operator $\Hcal_D: L^2(D) \to L^2(\M)$ by
\begin{equation*}
  (\Hcal_D\psi)(\bfx)
  \,:=\, \int_D \sqrt{q(\bfy)} g(\bfx,\bfy) \psi(\bfy) \dy \,,
  \qquad \bfx \in \M \,.
\end{equation*}
Then the adjoint $\Hcal_D^*: L^2(\M) \to L^2(D)$ of $\Hcal_D$ is given
by 
\begin{equation*}
  (\Hcal_D^*\phi)(\bfy)
  \,=\, \sqrt{q(\bfy)} \int_\M \ol{g(\bfx,\bfy)} \phi(\bfx) \dx \,,
  \qquad \bfy\in D \,.
\end{equation*}
Therewith, the covariance operator $\Ccal(q)$ can be decomposed
as 
\begin{equation*}
  \Ccal(q)
  \,=\, \Hcal_D \Hcal_D^* \,.
\end{equation*}

The following range identities are the first ingredient of our
reconstruction method. 
\begin{thm}
  \label{thm:range_identity}
  Suppose that $q\in L^\infty(\Omega)$, $q\geq0$ a.e.\ on $\Omega$. 
  \begin{enumerate}[(a)]
  \item The covariance operator $\Ccal(q)$ has a self-adjoint and
    positive-semidefinite square root $\Ccal(q)^{1/2}$, which satisfies 
    \begin{equation}
      \label{eq:range_identity}
      \ran\left(\Ccal(q)^{1/2}\right)
      \,=\,  \ran\left(\Hcal_D\right) \,. 
    \end{equation}
  \item  For any $\phi \in L^2(\M)$, $\phi\not=0$, 
    \begin{equation*}
      \ran\left(\Ccal(q)^{1/2}\right)
      \quad\Llra\quad
      \inf \left\{ \left\langle \psi, \Ccal(q) \psi \right\rangle_{L^2(\M)}
        \;:\; \psi\in L^2(\M)\,,\;
        \langle \psi, \phi \rangle_{L^2(\M)} = 1 \right\} > 0 \,. 
    \end{equation*}
  \end{enumerate}
\end{thm}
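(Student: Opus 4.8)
The plan is to establish the two parts separately, taking the factorization $\Ccal(q) = \Hcal_D\Hcal_D^*$ derived above as the starting point. For part~(a), I would first note that, being of the form $\Hcal_D\Hcal_D^*$, the operator $\Ccal(q)$ is self-adjoint and positive-semidefinite, and since it is Hilbert--Schmidt (hence compact), the spectral theorem furnishes a unique self-adjoint positive-semidefinite square root $\Ccal(q)^{1/2}$ with $\Ccal(q)^{1/2}\Ccal(q)^{1/2} = \Ccal(q)$. The substance of the assertion is the range equality \eqref{eq:range_identity}. To obtain it I would compute, for every $\phi\in L^2(\M)$,
\[
  \norm{\Ccal(q)^{1/2}\phi}_{L^2(\M)}^2
  = \langle \Ccal(q)\phi, \phi\rangle_{L^2(\M)}
  = \langle \Hcal_D^*\phi, \Hcal_D^*\phi\rangle_{L^2(D)}
  = \norm{\Hcal_D^*\phi}_{L^2(D)}^2 ,
\]
so that the adjoints $(\Ccal(q)^{1/2})^* = \Ccal(q)^{1/2}$ and $\Hcal_D^*$ agree in norm pointwise. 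I would then invoke the abstract range identity (see, e.g., \cite{Kirsch2008}): for bounded operators $T,S$ mapping into a common Hilbert space, $\ran(T)\subseteq\ran(S)$ holds if and only if $\norm{T^*\phi}\leq c\,\norm{S^*\phi}$ for some $c\geq0$ and all $\phi$. Applying this with $(T,S) = (\Ccal(q)^{1/2},\Hcal_D)$ and with the roles reversed yields both inclusions, hence \eqref{eq:range_identity}. It is worth emphasizing that this is the delicate point: the ranges are in general not closed, so the conclusion is considerably stronger than equality of closures and rests entirely on the two-sided norm estimate.

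For part~(b) I would use the spectral decomposition $\Ccal(q) = \sum_n \lambda_n \langle\ph,\phi_n\rangle_{L^2(\M)}\,\phi_n$, with eigenvalues $\lambda_n>0$ and an orthonormal eigenbasis $\{\phi_n\}$ of $\overline{\ran(\Ccal(q))} = (\ker\Ccal(q))^\perp$. Writing the quadratic form as $\langle\psi,\Ccal(q)\psi\rangle_{L^2(\M)} = \norm{\Ccal(q)^{1/2}\psi}_{L^2(\M)}^2 = \sum_n \lambda_n\abs{\langle\psi,\phi_n\rangle}^2$ and putting $a_n := \langle\phi,\phi_n\rangle$, the constrained minimization over $\langle\psi,\phi\rangle = 1$ reduces to minimizing $\sum_n\lambda_n\abs{c_n}^2$ subject to $\sum_n c_n\overline{a_n} = 1$. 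A sharp Cauchy--Schwarz estimate shows that the infimum equals $\big(\sum_n \abs{a_n}^2/\lambda_n\big)^{-1}$, with the convention $1/\infty = 0$. Since Picard's criterion characterizes membership $\phi\in\ran(\Ccal(q)^{1/2})$ by the convergence of $\sum_n \abs{a_n}^2/\lambda_n$ (together with $\phi\perp\ker\Ccal(q)$), the infimum is strictly positive precisely when $\phi\in\ran(\Ccal(q)^{1/2})$, which is the claimed equivalence.

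The delicate bookkeeping lies in part~(b). One must treat separately the case in which $\phi$ has a nonzero component in $\ker\Ccal(q) = \ker\Hcal_D^*$ --- where $\phi\notin\ran(\Ccal(q)^{1/2})$ and a kernel test function forces the infimum to vanish --- and the case $\phi\perp\ker\Ccal(q)$. In the latter, the convergent-series direction requires exhibiting the optimal coefficients $\{c_n\}$ and verifying that they are square-summable, while the divergent-series direction requires showing that the infimum is zero by means of truncated test functions whose minimal values $\big(\sum_{n\leq N}\abs{a_n}^2/\lambda_n\big)^{-1}$ tend to zero. Making both the attainment and the vanishing rigorous, alongside the kernel case analysis, is where the care is needed; part~(a) is then essentially a direct consequence of the abstract range lemma.
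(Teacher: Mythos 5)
Your argument is correct and in substance the same as the paper's, which disposes of both parts by citation: part~(a) is the range identity $\ran\bigl((\Hcal_D\Hcal_D^*)^{1/2}\bigr)=\ran(\Hcal_D)$ from \cite[Prop.~2.18]{Engl1996}, whose standard proof is exactly your computation $\norm{\Ccal(q)^{1/2}\phi}^2=\langle\Ccal(q)\phi,\phi\rangle=\norm{\Hcal_D^*\phi}^2$ combined with Douglas's range--inclusion lemma, and part~(b) is the inf-criterion \cite[Thm~1.16]{Kirsch2008} applied to $\Ccal(q)=\Ccal(q)^{1/2}\Ccal(q)^{1/2}$, whose proof is your spectral/Picard argument including the separate treatment of a kernel component of $\phi$. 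One correction to your roadmap for~(b): the optimal coefficients $c_n=a_n/\bigl(\lambda_n\sum_m\abs{a_m}^2/\lambda_m\bigr)$ are in general \emph{not} square-summable --- that would require $\sum_n\abs{a_n}^2/\lambda_n^2<\infty$, which is strictly stronger than Picard's condition since $\lambda_n\to0$ --- so the value $\bigl(\sum_n\abs{a_n}^2/\lambda_n\bigr)^{-1}$ need not be attained and you should not plan to ``verify'' square-summability; this is harmless, because the Cauchy--Schwarz lower bound holds for every admissible $\psi$ and already gives strict positivity of the infimum, while the truncated test functions you describe show the bound is sharp (and give the vanishing of the infimum in the divergent case).
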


\begin{proof}
  \begin{enumerate}[(a)]
  \item Since $\Ccal(q)$ is self-adjoint and positive-semidefinite, its
    square root is well-defined (see, e.g., \cite[p.~44]{Engl1996}). 
    The range identity \eqref{eq:range_identity} has, e.g., been shown
    in \cite[Prop.~2.18]{Engl1996}. 
  \item Observing that $\Ccal(q) = \Ccal(q)^{1/2}\Ccal(q)^{1/2}$,
    this follows from \cite[Thm~1.16]{Kirsch2008}. 
  \end{enumerate}
\end{proof}

The second ingredient of our reconstruction method is the following
characterization of the support of the source power function in terms
of the point sources $g(\ph,\bfz)|_\M$, $\bfz\in\Omega$, and the
range of the operator $\Hcal_D$. 
\begin{thm}
  \label{thm:shape_characterization}
  Suppose that $q\in L^\infty(\Omega)$, $q\geq0$ a.e.\ on $\Omega$,
  and let $\bfz\in\Omega$. 
  \begin{enumerate}[(a)]
  \item If $\bfz \in \inn\supp(q)$,
    then $g(\ph,\bfz)|_\M \in \ran\left(\Hcal_D\right)$.
  \item If $\bfz \in \Omega\setminus\out\supp(q)$,
    then $g(\ph,\bfz)|_\M \not\in \ran\left(\Hcal_D\right)$.
  \end{enumerate}
\end{thm}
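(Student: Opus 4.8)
The plan is to use that $\ran(\Hcal_D)$ consists exactly of the traces on $\M$ of radiating solutions of the convected Helmholtz equation whose source has the form $\sqrt{q}\,\psi$ with $\psi\in L^2(D)$, i.e.\ a source supported in $\ol D\subset\supp(q)$, and to treat the two inclusions separately. For part~(a) I would argue by an explicit construction. Since $\bfz\in\inn\supp(q)$, there is a ball $B:=B_\rho(\bfz)$ with $\ol B\subset\inn\supp(q)\subset D$ and $q\geq\eps$ a.e.\ on $B$ for some $\eps>0$; note that $\ol B\subset\Omega$ is disjoint from $\M\subset\Sigma_0$. Choosing a cutoff $\eta\in C^\infty(\Rd)$ with $\eta\equiv0$ near $\bfz$ and $\eta\equiv1$ outside $B$, I set $w:=\eta\,g(\ph,\bfz)$. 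Then $w\in C^\infty(\Rd)$, since the cutoff removes the only singularity of $g(\ph,\bfz)$; moreover $w=g(\ph,\bfz)$ outside $B$, so $w$ is radiating, and $\varphi:=-\big(\Delta+(k+\rmi\bfm\cdot\nabla)^2\big)w$ is smooth with $\supp\varphi\subset\ol B$, because the $\delta$-singularity of $g(\ph,\bfz)$ is annihilated by $\eta$ while $g(\ph,\bfz)$ solves the homogeneous equation where $\eta\equiv1$. By the uniqueness part of Lemma~\ref{lem:WellPosedness}, $w$ is the volume potential of $\varphi$, so $w(\bfx)=\int_B\varphi(\bfy)g(\bfx,\bfy)\dy$. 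Setting $\psi:=\varphi/\sqrt{q}$ on $B$ and $\psi:=0$ on $D\setminus B$, the bound $q\geq\eps$ on $B$ gives $\psi\in L^2(D)$, and $\Hcal_D\psi=w=g(\ph,\bfz)$ on $\M$, whence $g(\ph,\bfz)|_\M\in\ran(\Hcal_D)$.

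For part~(b) I would argue by contradiction. Assume $g(\ph,\bfz)|_\M=\Hcal_D\psi$ for some $\psi\in L^2(D)$, and let $v:=\int_D\sqrt{q(\bfy)}\,g(\ph,\bfy)\psi(\bfy)\dy$ be the associated radiating field; as the volume potential of a density supported in $\supp(q)$, it is real analytic in $\Rd\setminus\supp(q)$. Put $w:=v-g(\ph,\bfz)$. Since $\bfz\in\Omega\setminus\out\supp(q)$, the point $\bfz$ lies in an unbounded connected component of $\Rd\setminus\supp(q)$; as $\supp(q)\subset\ol\Omega$ is compact and $\R^d_-:=\{x_d<0\}$ is likewise contained in $\Rd\setminus\supp(q)$, both $\bfz$ and $\R^d_-$ belong to the unique unbounded component $W$ of $\Rd\setminus\supp(q)$. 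On the connected set $W\setminus\{\bfz\}$ the function $w$ is therefore real analytic and solves the homogeneous convected Helmholtz equation, it is radiating, it vanishes on $\M$, and it inherits the singularity of $g(\ph,\bfz)$ at $\bfz$ while $v$ remains analytic there.

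The key steps are then the following. First, because $\ol\Omega\cap\Sigma_0=\emptyset$, the field $w$ is real analytic in a full neighbourhood of $\Sigma_0$, so its restriction to $\Sigma_0\cong\R^{d-1}$ is real analytic; as $\Sigma_0$ is connected and $w$ vanishes on the nonempty relatively open subset $\M$, the identity theorem forces $w\equiv0$ on all of $\Sigma_0$. Second, $w$ is a radiating solution of the convected Helmholtz equation in $\R^d_-$ with vanishing Dirichlet data on $\Sigma_0=\di\R^d_-$; applying the Lorentz transformation of Proposition~\ref{prop:lorentztrafo}, which fixes $\Sigma_0$ and $\R^d_-$ and turns the convected radiation condition into the Sommerfeld condition, reduces this to the classical uniqueness statement for the half-space Dirichlet problem for the standard Helmholtz equation, whence $w\equiv0$ in $\R^d_-$. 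Third, $\R^d_-$ is an open subset of the connected real-analyticity set $W\setminus\{\bfz\}$, so $w\equiv0$ on $W\setminus\{\bfz\}$; this yields $v=g(\ph,\bfz)$ on a punctured neighbourhood of $\bfz$, contradicting that $v$ is analytic at $\bfz$ whereas $\abs{g(\bfx,\bfz)}\to\infty$ as $\bfx\to\bfz$. The contradiction proves $g(\ph,\bfz)|_\M\notin\ran(\Hcal_D)$.

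The main obstacle is to turn the single scalar piece of information $w|_\M=0$ into genuine vanishing on an open set, from which analyticity can do the rest. The delicate part is not any local computation but the chain $\M\rightsquigarrow\Sigma_0\rightsquigarrow\R^d_-$: one must first upgrade vanishing on the subarray $\M$ to vanishing on the entire hyperplane via real analyticity of the planar trace, and then convert vanishing Dirichlet data on $\Sigma_0$ into vanishing in the lower half-space, which is precisely where the radiation condition, transported through the Lorentz transformation, is indispensable. I also rely on the geometric facts that $\bfz$ and $\R^d_-$ sit in the same unbounded component of $\Rd\setminus\supp(q)$ and that $v$ stays analytic across $\bfz$, so that ordinary analytic continuation carries the vanishing of $w$ all the way to the singular point $\bfz$.
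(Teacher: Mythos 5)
Your proof is correct and follows essentially the same route as the paper: part (a) uses the identical cutoff construction $w=\eta\,g(\ph,\bfz)$ with density $-(\Delta+(k+\rmi\bfm\cdot\nabla)^2)w/\sqrt q$ supported in the ball where $q$ is bounded below, and part (b) runs the same chain (analytic continuation from $\M$ to $\Sigma_0$, vanishing on $\R^d_-$, continuation through the unbounded component of $\Rd\setminus\supp(q)$, contradiction with the singularity of $g(\ph,\bfz)$ against the boundedness of the volume potential near $\bfz$). The only presentational differences are that you identify $w$ with the volume potential via the uniqueness statement of Lemma~\ref{lem:WellPosedness} rather than the explicit computation with Propositions~\ref{prop:intbyparts} and~\ref{prop:representation_formula}, and in (b) you cite half-space Dirichlet uniqueness where the paper carries out the underlying odd-reflection argument explicitly.
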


\begin{rem}
  If the source power function $q$ is locally strictly positive in the
  sense of Remark~\ref{rem:LocStrPos}, then
  Theorem~\ref{thm:shape_characterization} can be used to determine
  whether a \emph{sampling point} $\bfz\in\Omega$ belongs to $D$ or to
  $\Omega\setminus\ol{D}$.   
\end{rem}

\begin{proof}[Proof of Theorem~\ref{thm:shape_characterization}]
  \begin{enumerate}[(a)]
  \item Let $\bfz \in \inn\supp(q)$.
    Then there exists an $\eps>0$ such that $B_\eps(\bfz) \tm D$ and
    $\essinf(q|_{B_\eps(\bfz)})>0$.
    Choose $\eta \in C^{\infty}(\R)$ with $0\leq\eta \leq 1$, 
    $\eta(s)=0$ for $|s|\leq \eps/2$,
    and $\eta(s) = 1$ for $|s| \geq \eps$.
    We define $w\in C^\infty(\Rd)$ by
    \begin{equation*}
      w(\bfx)
      \,:=\, \eta(|\bfx-\bfz|) g(\bfx,\bfz) \,, \qquad
      \bfx\in\Rd \,,\; \bfx\not=\bfz \,.
    \end{equation*}
    Let $\phi \in L^2(D)$ be given by
    \begin{equation*}
      \phi
      \,:=\,
      \begin{cases}
        - \frac{1}{\sqrt{q}}
        \left( \Delta w + (k + \rmi \bfm \cdot \nabla)^2w \right)
        & \text{in } B_\eps(\bfz) \,,\\
        0
        & \text{in } D\setminus \ol{B_\eps(\bfz)} \,.
      \end{cases}
    \end{equation*}
    Then, using \eqref{eq:intbyparts}, we find for any $\bfx\in\M$ that
    \begin{equation}
      \label{eq:Proof_shape_characterization-1}
      \begin{split}
        \left( \Hcal_D \phi \right) (\bfx)
        &\,=\, - \int_{B_\eps(\bfz)} g(\bfx, \bfy) \left(
          \Delta w + (k + \rmi \bfm \cdot \nabla)^2w \right)(\bfy)
        \dy \\
        &\,=\, - \int_{B_\eps(\bfz)} \left(
          \Delta_y g(\bfx,\bfy)
          + (k - \rmi \bfm \cdot \nabla_y)^2g(\bfx,\bfy)
        \right) w(\bfy) \dy \\
        &\phantom{\,=\,}
        - \oint_{\di B_\eps(\bfz)}  \left( g(\bfx,\bfy)
          \frac{\di w}{\di\bfn} (\bfy) 
          - w(\bfy) \frac{\di g(\bfx,\bfy)}{\di\bfn(\bfy)} \right)
        \ds(\bfy)\\
        &\phantom{\,=\,}
        - 2\rmi k \normm \oint_{\di B_\eps(\bfz)} g(\bfx,\bfy)
        w(\bfy) n_1(\bfy) \ds(\bfy)\\ 
        &\phantom{\,=\,}
        - \normm^2 \oint_{\di B_\eps(\bfz)} \left(
          w(\bfy) \frac{\di g(\bfx,\bfy)}{\di y_1} n_1(\bfy)
          - g(\bfx,\bfy) \frac{\di w}{\di y_1}(\bfy) n_1(\bfy) \right)
        \ds(\bfy) \,. 
      \end{split}
    \end{equation}
    Since
    \begin{equation*}
      \Delta_y g(\bfx,\ph)
      + (k - \rmi \bfm \cdot \nabla_y)^2g(\bfx,\ph)
      \,=\, 0
      \qquad \text{in } B_\eps(\bfz) \,,    
    \end{equation*}
    the volume integral on the right hand side of
    \eqref{eq:Proof_shape_characterization-1} vanishes.
    Moreover, the function $w$ is a radiating solution of the
    homogeneous convected Helmholtz equation on
    $\Rd \setminus \ol{B_\eps(\bfz)}$.
    Hence,~\eqref{eq:representation_formula} can be applied to
    conclude that 
    \begin{equation*}
      \left( \Hcal_D \phi \right) (\bfx)
      \,=\, w(\bfx)
      \,=\, g(\bfx, \bfz) \,, \qquad \bfx\in\M \,.
    \end{equation*}
    This yields the assertion. 
    
  \item Suppose that $\bfz \in \Omega\setminus\out\supp(q)$, and that
    $g(\ph,\bfz)|_\M \in \ran(\Hcal_D)$.
    Then there is $\psi\in L^2(D)$ such that
    \begin{equation*}
      g(\ph,\bfz)|_\M
      \,=\, \Hcal_D\psi \qquad \text{in } \M \,.
    \end{equation*}
    Since $\M\tm\Sigma_0$ is relatively open and
    \begin{equation*}
      v(\bfx)
      \,:=\, g(\bfx,\bfz)
      - \int_D \sqrt{q(\bfy)} g(\bfx,\bfy) \psi(\bfy) \dy \,,
      \qquad \bfx \in \Rd\setminus\{\bfz\} \,,
    \end{equation*}
    is real analytic in $\Rd\setminus(\ol{D}\cup\{\bfz\})$, we find by
    analytic continuation 
    that $v|_{\Sigma_0} = 0$.
    Now we use the reflection principle and define
    \begin{equation*}
      \vhat(\bfx)
      \,:=\,
      \begin{cases}
        v(\bfx) \,, & \bfx\in\R^d_- \,,\\
        -v(x_1,\ldots,x_{d-1},-x_d) \,, & \bfx\in\R^d_+ \,,
      \end{cases}
    \end{equation*}
    where ${\R^d_- := \{ \bfx\in\Rd \;:\; x_d < 0\}}$.
    Recalling that $\Omega\tm\R^d_+$, we find that
    $\vhat\in H^1_\loc(\Rd)$ is an entire radiating solution to the
    convected Helmholtz equation
    \begin{equation*}
      \Delta \vhat + (k + \rmi \bfm \cdot \nabla)^2\vhat 
      \,=\,  0  \qquad \text{in } \Rd \,.
    \end{equation*}
    Thus, $\vhat$ must vanish identically on $\Rd$ (see
    \cite[p.~28]{Colton2019} for the corresponding result for the
    standard Helmholtz equation, and use one-to-one correspondence
    between radiating solutions to the standard Helmholtz equation and
    radiating solutions to the convected Helmholtz equation by means
    of the Lorentz transformation).
    Therefore, $v$ vanishes on~$\R^d_-$, and we find by
    analytic continuation that~$v$ is zero on
    $\Rd\setminus(\out\supp(q)\cup\{\bfz\})$.
    Here we used that $\Rd\setminus\out\supp(q)$ is connected. 
    This means that
    \begin{equation}
      \label{eq:Proof_shape_characterization-2}
      g(\bfx,\bfz)
      \,=\, \int_D \sqrt{q(\bfy)} g(\bfx,\bfy) \psi(\bfy) \dy \,,
      \qquad \bfx \in \Rd\setminus(\out\supp(q)\cup\{\bfz\}) \,.
    \end{equation}
    
    However, the left hand side of
    \eqref{eq:Proof_shape_characterization-2} is unbounded on 
    $B_{1}(\bfz) \cap (\Rd\setminus(\out\supp(q)\cup\{\bfz\}))$, while
    for the right hand side we obtain that
    \begin{equation*}
      \left| \int_D \sqrt{q(\bfy)} g(\bfx,\bfy) \psi(\bfy) \dy
      \right|^2
      \,\leq\, \norm{q}_{L^\infty(\Omega)}
      \norm{g(\bfx,\ph)}_{L^2(D)}^2 \norm{\psi}_{L^2(D)}^2
    \end{equation*}
    is uniformly bounded for 
    $\bfx \in B_{1}(\bfz) \cap
    (\Rd\setminus(\out\supp(q)\cup\{\bfz\}))$ by
    \eqref{eq:green_upperbnd}. This gives a contradiction, and thus 
    we have shown that
    $g(\ph,\bfz)|_\M \not\in \ran(\Hcal_D)$.
  \end{enumerate}
\end{proof}

Combining
Theorems~\ref{thm:range_identity}--\ref{thm:shape_characterization}
gives the following result.
\begin{cor}
  \label{cor:InfCriterion}
  Suppose that $q\in L^\infty(\Omega)$, $q\geq0$ a.e.\ on $\Omega$,
  and let $\bfz\in\Omega$. 
  \begin{enumerate}[(a)]
  \item If $\bfz \in \inn\supp(q)$, then
    \begin{equation}
      \label{eq:InfCriterion}
      \inf \left\{ \left\langle \psi, \Ccal(q) \psi \right\rangle_{L^2(\M)}
        \;:\; \psi\in L^2(\M)\,,\;
        \langle \psi, g(\ph,\bfz)|_\M \rangle_{L^2(\M)} = 1 \right\} > 0 \,.
    \end{equation}
  \item If $\bfz \in \Omega\setminus\out\supp(q)$, then the infimum in
    \eqref{eq:InfCriterion} is zero. 
  \end{enumerate}
\end{cor}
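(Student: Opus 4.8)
The plan is to assemble Corollary~\ref{cor:InfCriterion} by chaining the two theorems already established, using the point source $g(\ph,\bfz)|_\M$ in the role of the test element $\phi$ in Theorem~\ref{thm:range_identity}(b). The logical skeleton is short: Theorem~\ref{thm:shape_characterization} tells me \emph{whether} $g(\ph,\bfz)|_\M$ lies in $\ran(\Hcal_D)$ depending on the location of $\bfz$, the range identity \eqref{eq:range_identity} in Theorem~\ref{thm:range_identity}(a) converts this into a membership statement for $\ran(\Ccal(q)^{1/2})$, and finally Theorem~\ref{thm:range_identity}(b) translates range membership into positivity of the inf-functional \eqref{eq:InfCriterion}.

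First I would handle part~(a). If $\bfz\in\inn\supp(q)$, then Theorem~\ref{thm:shape_characterization}(a) gives $g(\ph,\bfz)|_\M\in\ran(\Hcal_D)$, and \eqref{eq:range_identity} yields $g(\ph,\bfz)|_\M\in\ran(\Ccal(q)^{1/2})$. Before invoking Theorem~\ref{thm:range_identity}(b) I must check the standing hypothesis $\phi\neq0$ there, i.e.\ that $g(\ph,\bfz)|_\M\not\equiv0$ on $\M$; this is immediate because $g(\ph,\bfz)$ is a nontrivial radiating solution and hence cannot vanish on the relatively open set $\M$ (by the same analytic-continuation reasoning used in the proof of Theorem~\ref{thm:shape_characterization}(b)). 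With $\phi:=g(\ph,\bfz)|_\M$ the equivalence in Theorem~\ref{thm:range_identity}(b) then gives exactly \eqref{eq:InfCriterion}.

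For part~(b), if $\bfz\in\Omega\setminus\out\supp(q)$, then Theorem~\ref{thm:shape_characterization}(b) gives $g(\ph,\bfz)|_\M\not\in\ran(\Hcal_D)$, so by \eqref{eq:range_identity} also $g(\ph,\bfz)|_\M\not\in\ran(\Ccal(q)^{1/2})$. The equivalence in Theorem~\ref{thm:range_identity}(b) then forces the infimum to fail to be positive; since $\Ccal(q)$ is positive-semidefinite the functional $\langle\psi,\Ccal(q)\psi\rangle$ is nonnegative, so the infimum over the (nonempty, by the same $\phi\neq0$ remark) admissible set is exactly zero.

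Honestly this corollary is essentially a bookkeeping exercise, so there is no deep obstacle. The one point deserving care is the nonvanishing of $g(\ph,\bfz)|_\M$, which is needed to legitimately apply Theorem~\ref{thm:range_identity}(b) and to guarantee the admissible set $\{\psi:\langle\psi,g(\ph,\bfz)|_\M\rangle=1\}$ is nonempty; I would state this explicitly rather than leave it implicit. Everything else is direct substitution of the two theorems.
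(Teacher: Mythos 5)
Your proposal is correct and follows exactly the route the paper intends: the corollary is stated as an immediate combination of Theorems~\ref{thm:range_identity} and~\ref{thm:shape_characterization}, which is precisely your chain $g(\ph,\bfz)|_\M \in \ran(\Hcal_D) \Leftrightarrow g(\ph,\bfz)|_\M \in \ran(\Ccal(q)^{1/2}) \Leftrightarrow$ positivity of the infimum. Your explicit check that $g(\ph,\bfz)|_\M \neq 0$ (needed to invoke Theorem~\ref{thm:range_identity}(b) and to ensure the constraint set is nonempty) is a worthwhile detail the paper leaves implicit.
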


Since $\Ccal(q): L^2(\M) \to L^2(\M)$ is compact, self-adjoint, and
positive-semidefinite, it has a complete ortho\-normal eigensystem.
We assume that the (possibly finite) sequence of positive
eigenvalues~$(\lambda_j)_{j\in\N}$ is in decreasing order such that
each eigenvalue is repeated according to its multiplicity, and we
denote by $(\psi_j)_{j\in\N}$ the corresponding sequence of
ortho\-normal eigenfunctions. 
Accordingly, the nonzero eigenvalues and the corresponding
eigenvectors of $\Ccal(q)^{1/2}$ are given
by~$(\sqrt{\lambda_j})_{j\in\N}$ and~$(\psi_j)_{j\in\N}$, respectively. 

\begin{thm}
  \label{thm:MainResult}
  Suppose that $q\in L^\infty(\Omega)$, $q\geq0$ a.e.\ on $\Omega$,
  and assume that $\inn\supp(q)\not=\emptyset$.
  Let $\bfz\in\Omega$. 
  \begin{enumerate}[(a)]
  \item If $\bfz \in \inn\sup(q)$, then
    \begin{equation}
      \label{eq:PicardSeries}
      \sum_{j=1}^\infty \frac{\left|\left\langle g(\ph,\bfz),
            \psi_j \right\rangle_{L^2(\M)}\right|^2}{\lambda_j}
      \,<\, \infty \,.
    \end{equation}
  \item If $\bfz \in \Omega\setminus\out\supp(q)$,
    then the series in \eqref{eq:PicardSeries} does not converge. 
  \end{enumerate}
\end{thm}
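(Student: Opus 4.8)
The plan is to read Theorem~\ref{thm:MainResult} as the Picard-series incarnation of the factorization method, obtained by feeding the range identity of Theorem~\ref{thm:range_identity}(a) and the shape characterization of Theorem~\ref{thm:shape_characterization} into Picard's criterion for the operator $\Ccal(q)^{1/2}$. Recall that $\Ccal(q)^{1/2}$ is compact, self-adjoint and positive-semidefinite with nonzero eigenvalues $(\sqrt{\lambda_j})_{j\in\N}$ and orthonormal eigenfunctions $(\psi_j)_{j\in\N}$. Picard's criterion then states that, for $\phi\in L^2(\M)$,
\[
  \phi\in\ran\!\big(\Ccal(q)^{1/2}\big)
  \quad\Llra\quad
  \phi\in\ol{\ran\!\big(\Ccal(q)^{1/2}\big)}
  \ \text{ and }\
  \sum_{j=1}^\infty
  \frac{\abs{\langle\phi,\psi_j\rangle_{L^2(\M)}}^2}{\lambda_j}<\infty .
\]
I would apply this with $\phi=g(\ph,\bfz)|_\M$ and combine it with the identity $\ran(\Ccal(q)^{1/2})=\ran(\Hcal_D)$ from Theorem~\ref{thm:range_identity}(a).

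For part (a) only the elementary implication is needed. If $\bfz\in\inn\supp(q)$, then Theorem~\ref{thm:shape_characterization}(a) gives $g(\ph,\bfz)|_\M\in\ran(\Hcal_D)=\ran(\Ccal(q)^{1/2})$, so $g(\ph,\bfz)|_\M=\Ccal(q)^{1/2}u$ for some $u\in L^2(\M)$. Using self-adjointness, $\langle g(\ph,\bfz),\psi_j\rangle_{L^2(\M)}=\sqrt{\lambda_j}\,\langle u,\psi_j\rangle_{L^2(\M)}$, so the series in \eqref{eq:PicardSeries} equals $\sum_j\abs{\langle u,\psi_j\rangle_{L^2(\M)}}^2\le\norm{u}_{L^2(\M)}^2<\infty$. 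This direction holds unconditionally and uses neither the closure condition nor the standing assumption $\inn\supp(q)\neq\emptyset$.

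For part (b) I would argue by contradiction: assume $\bfz\in\Omega\setminus\out\supp(q)$ but that the series in \eqref{eq:PicardSeries} converges. Then $u:=\sum_j\lambda_j^{-1/2}\langle g(\ph,\bfz),\psi_j\rangle_{L^2(\M)}\,\psi_j$ defines an element of $L^2(\M)$ with $\Ccal(q)^{1/2}u=P\,g(\ph,\bfz)|_\M$, where $P$ is the orthogonal projection onto $\ol{\ran(\Ccal(q)^{1/2})}$. To contradict Theorem~\ref{thm:shape_characterization}(b), which asserts $g(\ph,\bfz)|_\M\notin\ran(\Hcal_D)$, I must upgrade $P\,g(\ph,\bfz)|_\M$ to $g(\ph,\bfz)|_\M$ itself, that is, I need $g(\ph,\bfz)|_\M\in\ol{\ran(\Hcal_D)}$. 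The main obstacle is exactly this closure condition, and it is here---and only here---that the hypothesis $\inn\supp(q)\neq\emptyset$ enters; note that the inf-criterion in Corollary~\ref{cor:InfCriterion} required no such closure, which is why it could dispense with this assumption.

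I would resolve the closure condition by showing that $\Hcal_D$ has dense range, equivalently that $\Hcal_D^*$ is injective. If $\Hcal_D^*\phi=0$, then $\sqrt{q(\bfy)}\,V(\bfy)=0$ for a.e.\ $\bfy\in D$, where $V(\bfy):=\int_\M\ol{g(\bfx,\bfy)}\phi(\bfx)\dx$ is a single-layer potential with density $\phi$ on $\M$. Since $\ol{\M}\tm\Sigma_0$ is disjoint from $\R^d_+$, the function $V$ is real analytic on the connected set $\Rd\setminus\ol{\M}$ and solves there the conjugate of a homogeneous convected Helmholtz equation. On the nonempty open set $\inn\supp(q)\tm\R^d_+$ the power $q$ is bounded below, hence $V$ vanishes on $\inn\supp(q)$; by the identity theorem for real-analytic functions on the connected set $\Rd\setminus\ol{\M}$ this forces $V\equiv0$ off $\ol{\M}$, and the jump relation for the normal derivative of a single-layer potential across $\M$ then yields $\phi=0$. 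These are the same analyticity and uniqueness arguments already used in the proof of Theorem~\ref{thm:shape_characterization}(b), transferred to the convected setting via the Lorentz transformation of Proposition~\ref{prop:lorentztrafo}. Granting $\ol{\ran(\Hcal_D)}=L^2(\M)$, the projection $P$ is the identity, so $g(\ph,\bfz)|_\M=\Ccal(q)^{1/2}u\in\ran(\Hcal_D)$, contradicting Theorem~\ref{thm:shape_characterization}(b); therefore the series must diverge.
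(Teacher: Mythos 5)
Your proposal is correct and follows the same overall skeleton as the paper (range identity $+$ shape characterization $+$ Picard's theorem, with part (a) being the unconditional easy direction), but you establish the key closure condition by a genuinely different and strictly stronger route. Where you prove that $\Hcal_D$ has \emph{dense} range --- via injectivity of $\Hcal_D^*$, unique continuation of the single-layer potential $V$ to all of $\Rd\setminus\ol{\M}$, and the jump relation for its normal derivative across $\M$ --- the paper only shows the weaker statement that each individual $g(\ph,\bfy)|_\M$, $\bfy\in\Omega$, lies in $\ker(\Hcal_D^*)^\perp=\ol{\ran(\Hcal_D)}$: for $\phi\in\ker(\Hcal_D^*)$ the analytic function $\bfy\mapsto\int_\M\ol{g(\bfx,\bfy)}\phi(\bfx)\dx$ vanishes on the open set $B\tm\inn\supp(q)$ where $q$ is bounded below, hence on all of $\Omega$ by analytic continuation, which already gives $g(\ph,\bfy)|_\M\perp\phi$. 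The paper's variant buys two things your route does not: it never leaves the source region $\Omega\tm\R^d_+$, so it needs neither the connectivity of $\Rd\setminus\ol{\M}$ (which fails, e.g., if $\M$ were the whole hyperplane, and is otherwise tied to boundedness of the array) nor the jump relations for $L^2$ densities on a relatively open surface patch, both of which you assert rather than verify. Conversely, your argument yields the stronger by-product $\ol{\ran(\Hcal_D)}=L^2(\M)$, which is not needed for the theorem. Your diagnosis of where the hypothesis $\inn\supp(q)\neq\emptyset$ enters --- only in the closure condition for part (b), and not in the inf-criterion of Corollary~\ref{cor:InfCriterion} --- matches the paper exactly.
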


\begin{proof}
  We first show that
  \begin{equation}
    \label{eq:Proof-MainResult-1}
    g(\ph,\bfy) \in \ol{\ran(\Ccal(q)^{1/2})} \qquad
    \text{for any } \bfy \in \Omega \,.
  \end{equation}
  To see this, let $\phi \in \ker(\Hcal_D^*)$, i.e.,
  \begin{equation*}
    0
    \,=\, (\Hcal_D^*\phi)(\bfy)
    \,=\, \sqrt{q(\bfy)} \int_\M \ol{g(\bfx, \bfy)} \phi(\bfx) \dx
    \qquad \text{for all } \bfy \in D \,.
  \end{equation*}
  By assumption there is an open subset $B\tm D$ such that
  $\essinf(q|_{B})>0$.
  Thus,
  \begin{equation*}
    \int_\M \ol{g(\bfx, \bfy)} \phi(\bfx) \dx = 0 
    \qquad \text{for any } \bfy \in B \,,
  \end{equation*}
  and by analytic continuation this holds even for any
  $\bfy \in \Omega$.
  Accordingly,
  \begin{equation*}
    g(\ph, \bfy) \in \ker(\Hcal_D^*)^\perp
    \,=\, \ol{\ran(\Hcal_D)}
    \qquad \text{for any } \bfy\in \Omega \,.
  \end{equation*}
  The range identity \eqref{eq:range_identity} gives
  \eqref{eq:Proof-MainResult-1}. 
  
  Therefore, combining \eqref{eq:range_identity} and
  Theorem~\ref{thm:shape_characterization}, and applying Picard's
  theorem (see, e.g., \cite[Thm.~4.8]{Colton2019}) yields the assertion
  of the theorem. 
\end{proof}

Usually in practice, only a finite number of microphones at positions
$\bfx_1, \dots , \bfx_M \in\M$ is available to measure the random
pressure fluctuations.
A self-adjoint, positive-semidefinite correlation matrix
$\Cmat \in \C^{M \times M}$, which approximates the covariance
operator $\Ccal(q)$, can be obtained from these observations using
Welch's method~\cite{Welch1967}.
We denote by $(\lambda_j, \ul{\psi}_j)_{1\leq j\leq M}$ an ortho\-normal
eigensystem of $\Cmat$ such that the eigenvalues are in decreasing
order and counted with multiplicity. 
Let $0< \npos \leq M$ be the number of positive eigenvalues.
Then we define the \emph{imaging functional}~$\Ifac:\, \Omega \to \R$
of the factorization method by 
\begin{equation}
  \label{eq:imagingfunc_fac_discrete1} 
  \Ifac(\bfz)
  \,:=\, \left( \sum_{j=1}^{\npos}
    \frac{\abs{\left\langle \gvec(\bfz), \ul{\psi}_j \right\rangle_{2}}^2}
    {\lambda_j} \right)^{-1} \,, \qquad \bfz\in\Omega \,,
\end{equation}
where
\begin{equation*}
  \gvec(\bfz)
  \,:=\, \left[ g(\bfx_1, \bfz), \ldots,
    g(\bfx_M, \bfz) \right]^\top \,, \qquad \bfz\in\Omega \,.
\end{equation*}
Denoting by $\Cmat^\dagger$ and $(\Cmat^{1/2})^\dagger$ the
pseudoinverses of $\Cmat$ and $\Cmat^{1/2}$, respectively,
\eqref{eq:imagingfunc_fac_discrete1} can be rewritten as
\begin{equation}
  \label{eq:imagingfunc_fac_discrete2} 
  \Ifac(\bfz)
  \,=\, \norm{(\Cmat^{1/2})^\dagger \gvec(\bfz) }_2^{-2}
  \,=\, \left( \gvec(\bfz)^* \Cmat^\dagger \gvec(\bfz) \right)^{-1} \,,
  \quad \bfz\in\Omega \,.
\end{equation}
According to Theorem~\ref{thm:MainResult}, the values of $\Ifac(\bfz)$
should be much smaller for $\bfz\in\Omega\setminus\out\supp(q)$ than
for $\bfz\in\inn\supp(q)$. 

The imaging functional in \eqref{eq:imagingfunc_fac_discrete2} is
closely related to Capon's method \cite{Capon1969} from seismic
imaging. 
In the context of correlation based aeroacoustic source mapping this
method is also known as the minimum variance method (see, e.g.,
\cite{Li2003,Lorenz2005}). 
This relationship is  quite surprising as Capon's method was
originally derived from a totally different viewpoint.
In the next subsection we discuss this observation in some more
detail.

\section{Capon's method}
\label{sec:CaponsMethod}
In aeroacoustic source identification imaging functionals
$\ul{\Ical}:\Omega\to\R$ are usually defined on a source
region~$\Omega \subset \Rd$ as introduced at the beginning of
Section~\ref{sec:aeroacoustic_ip}. 
Imaging procedures that map focus points $\bfz \in \Omega$ in the
source region directly to an image value $\ul{\Ical}(\bfz)$
independently of all other focus points $\bfz' \in \Omega$,
$\bfz' \neq \bfz$, are called \emph{beamforming methods}.
As they do not require evaluations of the source problem, a main
advantage of beamforming methods is that they are usually very fast.
On the other hand, these methods typically rely on heuristic
arguments and can only capture the main features of the source
power function rather than providing an exact reconstruction. 

Following the usual presentation in the field (see, e.g.,
\cite{Sijtsma2004}) a beamforming \emph{imaging functional} is defined
by
\begin{equation*}
  \ul{\Ical}_{\wvec}(\bfz)
  \,:=\, \wvec(\bfz)^* \Cmat \wvec(\bfz) \,,
  \qquad \bfz\in\Omega \,,
\end{equation*}
with a \emph{steering vector} $\wvec(\bfz) \in \C^M$ that depends on 
the \emph{focus point} $\bfz$ and is assumed to satisfy the constraint 
\begin{equation}
  \label{eq:UnitGain}
  \wvec(\bfz)^* \gvec(\bfz)
  \,=\, 1 \,. 
\end{equation}
The latter is often called \emph{unit gain}.
A particular beamforming method is therefore fully determined by its
steering vector. 
The steering vector of Capon's method is given by
\begin{equation}
  \label{eq:wcap}
  \wvec^{\mathrm{Cap}}(\bfz)
  \,:=\, \frac{\Cmat^\dagger \gvec(\bfz)}
  {\gvec(\bfz)^* \Cmat^\dagger \gvec(\bfz)} \,,
  \qquad \bfz\in\Omega \,.
\end{equation}
This yields the imaging functional $\Icap:\, \Omega \to \R$, 
\begin{equation}
  \label{eq:ICapon}
  \Icap(\bfz)
  \,:=\, \frac{\gvec(\bfz)^* \Cmat^\dagger}
  {\gvec(\bfz)^* \Cmat^\dagger \gvec(\bfz)}  \Cmat
  \frac{\Cmat^\dagger \gvec(\bfz)}{\gvec(\bfz)^* \Cmat^\dagger \gvec(\bfz)}
  \,=\, \left( \gvec(\bfz)^* \Cmat^\dagger \gvec(\bfz) \right)^{-1} \,,
  \qquad \bfz\in\Omega \,,
\end{equation}
which coincides with the discrete imaging functional $\Ifac$ of the
factorization method in \eqref{eq:imagingfunc_fac_discrete2}.

In the traditional derivation of Capon's method (see, e.g.,
\cite[p.~358]{Johnson1993}) it is assumed that the correlation
matrix $\Cmat$ is positive-definite, and the steering
vector~$\wvec^{\mathrm{Cap}}(\bfz)$ is obtained, for any
$\bfz\in\Omega$, as the solution of the constrained optimization
problem, 
\begin{equation}
  \label{eq:wcapon_def}
  \wvec^{\mathrm{Cap}}(\bfz)
  \,=\, \argmin \limits_{\wvec \in \C^M} \wvec^* \Cmat \wvec
  \quad \text{subject to} \quad
  \wvec^* \gvec(\bfz) = 1 \,.
\end{equation}
If $\gvec(\bfz)\in\ran(\Cmat)$, which is always the case when $\Cmat$
is positive-definite, then $\wvec^{\mathrm{Cap}}(\bfz)$
from~\eqref{eq:wcap} is a solution to \eqref{eq:wcapon_def} (see,
e.g., \cite[pp.~443--447]{NocWri99}). 
The minimization problem \eqref{eq:wcapon_def} is usually motivated as
follows. 
According to our model in Section~\ref{sec:aeroacoustic_ip} the
pressure signals at the microphone positions
$\bfx_1,\ldots,\bfx_M\in\M$ are zero-mean, complex random variables.
We collect them in a vector-valued random variable
$\pvec:=[p(\bfx_1),\ldots,p(\bfx_M)]^\top\in\C^M$ with
zero mean. 
Then $\Cov(\pvec) = \E(\pvec\pvec^*) = \Cmat$, and considering the
inner product with the steering vector $\wvec(\bfz)^*\pvec$, one seeks
to reduce noise as well as signals coming from other focus
points~${\bfz'\in\Omega}$,~${\bfz'\not=\bfz}$, whereas the signal
originating at the focus point $\bfz$ should not be dampened. 
The latter requirement is ensured by the unit gain
constraint~\eqref{eq:UnitGain}. 
The first requirement is enforced by minimizing the variance of
$\wvec(\bfz)^*\pvec$.
Therefore, Capon's method is also known as
\emph{minimum variance method}.
Minimizing the variance yields
\begin{equation*}
  \begin{split}
    \min_{\wvec\in\C^M} \var(\wvec^*\pvec)
    &\,=\, \min_{\wvec\in\C^M}
    \E\left(|\wvec^*\pvec|^2\right)
    \,=\, \min_{\wvec\in\C^M} \wvec^*
    \E\left(\pvec\pvec^*\right) \wvec 
    \,=\, \min_{\wvec\in\C^M} \wvec^* \Cmat \wvec \,,
  \end{split}
\end{equation*}
which explains the cost functional in \eqref{eq:wcapon_def}.

Finally, we note that Capon's beamformer can equivalently be written as 
\begin{equation*}
  \Icap(\bfz)
  \,=\, \inf \left \lbrace \wvec^* \Cmat \wvec \;:\;
    \wvec \in \C^M \,,\;
    \wvec^* \gvec(\bfz) = 1 \right\rbrace \,,
  \qquad \bfz\in\Omega \,,
\end{equation*}
whenever $\gvec(\bfz)\in\ran(\Cmat)$. 
This is the discrete analogue of the infimum in the inf-criterion of
the factorization method in Corollary~\ref{cor:InfCriterion}.

\section{Numerical examples}
\label{sec:numerical_examples}
We conclude our investigations with some numerical results for the
factorization method on experimental data, and we compare these
reconstructions to results that are obtained using two commonly used
conventional beamforming schemes. 
The dataset was measured at the cryogenic wind tunnel in Cologne
(DNW-KKK) on a $1:9.24$ scaled Dornier 728 half
model~\cite{Ahlefeldt2013}. 
Figure~\ref{fig:foto_experimental} shows the setup of this experiment.
The measurement array (on the right hand side of the picture) consists
of~$134$ microphones, which are flush-mounted at the wall of the wind
tunnel. 
The Mach number of the flow field is $|\bfm|=0.125$ (i.e., the wind
speed is $|\bfu|=43$~m/s), the angle of attack (i.e., the inclination
angle of the wing's cross section plane) is $9\degree$, and the
temperature is 11$\degree$C. 

The raw output data of the experiment consists of time series of
acoustic pressure fluctuations for each microphone with a total 
measurement time interval of $30$~s and a sampling frequency
of~$120$~kHz. 
These time series are then post-processed to obtain an estimated
correlation matrix~$\Cmat\in\C^{134\times134}$ using
Welch's method \cite{Welch1967} with a Hann weighting window, a
block size of $1024$ time samples and an overlap factor of 0.5.

\begin{figure}[H]
  \centering 
  \includegraphics[height = 7cm]{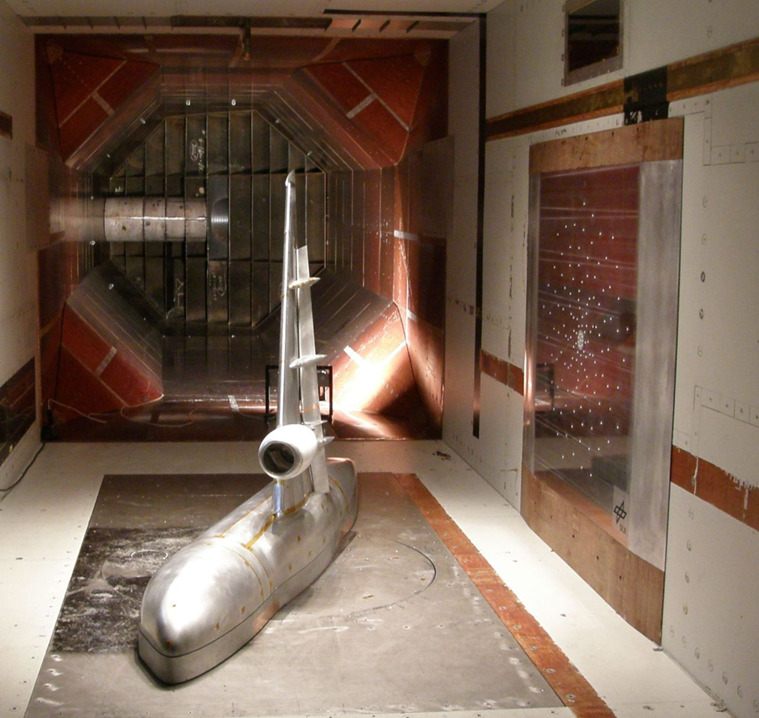}
  \caption{Photograph of the experimental setup with microphone array
    on the side wall~(courtesy of T.~Ahlefeldt, DLR G\"ottingen)}
  \label{fig:foto_experimental}
\end{figure}

We evaluate the imaging functional $\Ifac$ of the factorization method
from \eqref{eq:imagingfunc_fac_discrete1}, which coincides with
  the imaging functional of Capon's method from \eqref{eq:ICapon}, on
a two-dimensional plane~$\widetilde\Omega \subset \Omega$ that is
aligned to the cross-section of the aircraft wing.
The map size is $1.05$~m$\ \times \ 1.45$~m and the grid spacing is
$1$~cm.  
We compare these results to source maps obtained using two conventional
beamformers with and without diagonal removal (cf., e.g.,
\cite{Sijtsma2004}), which are defined by 
\begin{align}
  \Icbf(\bfz)
  &\,=\, \frac{\gvec(\bfz)^* \Cmat \gvec(\bfz)}{|\gvec(\bfz)|^4} \,,
  && \bfz \in \widetilde\Omega \,, \label{eq:ConvBeamformer}\\
  \Icbfdr(\bfz)
  &\,=\, \frac{\gvec(\bfz)^* \Cmat \gvec(\bfz)
    - \sum_{j=1}^M \Cmat_{jj} 
    |\gvec(\bfz)_j |^2 }{|\gvec(\bfz) |^4 - \sum_{j=1}^M |
    \gvec(\bfz)_j |^4 } \,,
  && \bfz \in \widetilde\Omega \,. \label{eq:ConvBeamformerDR}
\end{align}
  Diagonal removal is often used in experimental aeroacoustics to
  lower the effect of wind noise due to turbulent boundary layers
  directly at the microphone array (see, e.g., \cite{Sijtsma2004}).
  The imaging functionals in
  \eqref{eq:ConvBeamformer}--\eqref{eq:ConvBeamformerDR} can also be
  written as 
  \begin{align*}
  \Icbf(\bfz)
  &\,=\, \argmin_{\mu\in\R} \left\| \Cmat
    - \mu \gvec(\bfz)\gvec(\bfz)^* \right\|_F^2 \,,
  && \bfz \in \widetilde\Omega \,, \\
  \Icbfdr(\bfz)
    &\,=\, \argmin_{\mu\in\R} \sum_{\substack{j,\ell=1\\j\not=\ell}}^M
    \left| \Cmat_{j\ell} - \mu \gvec(\bfz)_j \ol{\gvec(\bfz)_\ell}
    \right|^2\,,
  && \bfz \in \widetilde\Omega \,. 
  \end{align*}
  where $\|\cdot\|_F$ denotes the Frobenius norm. 

To further reduce noise effects, the imaging outputs
$\ul{\Ical}(f,\bfz)$ of each imaging functional for single frequencies
$f$ are averaged over a frequency band $B$, i.e., we evaluate
the sum 
\begin{equation*}
  \ol{\ul{\Ical}}_B(\bfz)
  \,=\, \sum_{f \in B} \ul{\Ical}(f,\bfz) \,,
  \qquad \bfz \in \widetilde\Omega \,.
\end{equation*}
Here we consider third octave bands with center frequency
$f_{\nicefrac{1}{3}Oct}$, which are defined by 
\begin{equation*}
  B(f_{\nicefrac{1}{3}Oct})
  \,=\,  \left[2^{\nicefrac{-1}{6} }  f_{\nicefrac{1}{3}Oct} \,, 
    \ 2^{\nicefrac{1}{6} }  f_{\nicefrac{1}{3}Oct} \right] \,.
\end{equation*}
With a frequency resolution
$\Delta f = \frac{120\ \mathrm{kHz}}{1024} \approx 117$~Hz, the
number of discrete frequencies that are contained in a frequency band 
$B = [f_1, f_2]$ is given by 
\begin{equation}
  \label{eq:nof_in_band}
  \left \lfloor \frac{f_2}{\Delta f} \right \rfloor
  - \left \lceil \frac{f_1}{\Delta f} \right \rceil + 1 \,. 
\end{equation}
We note that in the notation of Section~\ref{sec:aeroacoustic_ip}
this corresponds to $\omega=2\pi f$, i.e., to a wave
number~${k = 2 \pi f/c}$, where $c\approx 345$~m/s is the speed of
sound. 
Moreover, the averaged imaging
values~$\ol{\ul{\Ical}}_{B(f_{\nicefrac{1}{3}Oct})}(\bfz)$ are 
normalized to the interval $[0,1]$ for each of the three methods. 
\begin{figure}
  \begin{subfigure}{.31\textwidth}
    \includegraphics[height = 1.05\textwidth]{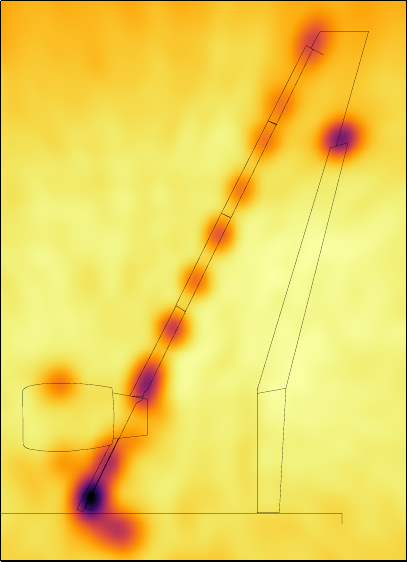}
    \caption{$f_{\nicefrac{1}{3}Oct} = 8\,\mathrm{kHz}$, \newline beamforming }
  \end{subfigure} 
  \begin{subfigure}{.31\textwidth}
    \includegraphics[height = 1.05\textwidth]{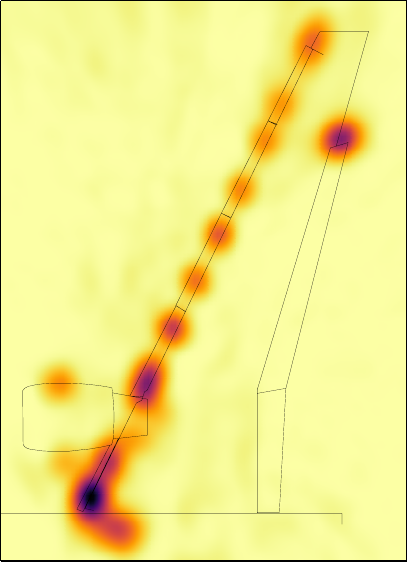}
    \caption{$f_{\nicefrac{1}{3}Oct} = 8\,\mathrm{kHz}$, \newline beamforming + DR}
  \end{subfigure}
  \begin{subfigure}{.31\textwidth}
    \includegraphics[height = 1.05\textwidth]{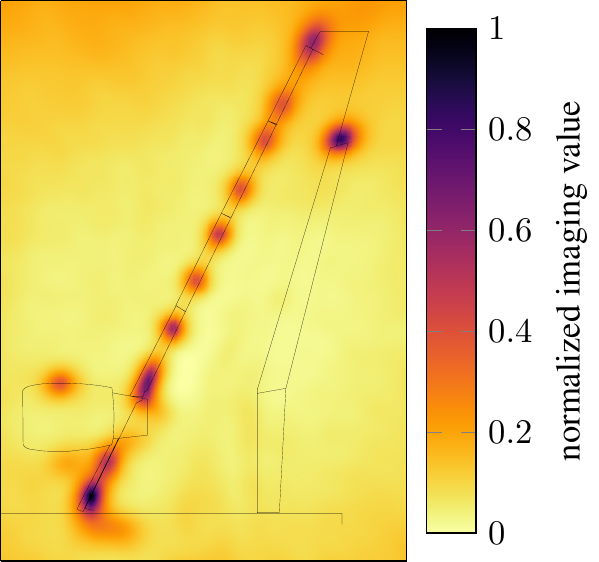}
    \caption{$f_{\nicefrac{1}{3}Oct} = 8\,\mathrm{kHz}$, \newline factorization method}
  \end{subfigure}
  \ \vspace{0.5cm}  \\ 
  \begin{subfigure}{.31\textwidth}
    \includegraphics[height = 1.05\textwidth]{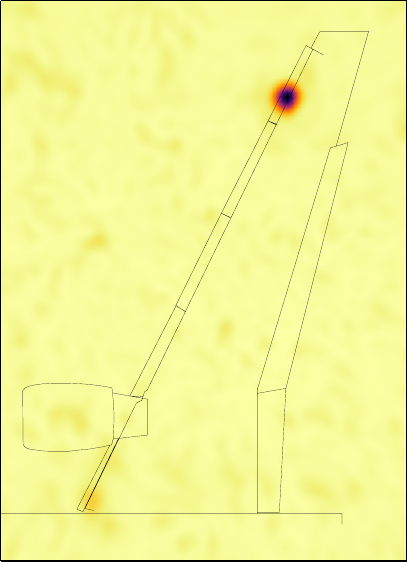}
    \caption{$f_{\nicefrac{1}{3}Oct} = 12\,\mathrm{kHz}$, \newline beamforming}
  \end{subfigure}
  \begin{subfigure}{.31\textwidth}
    \includegraphics[height = 1.05\textwidth]{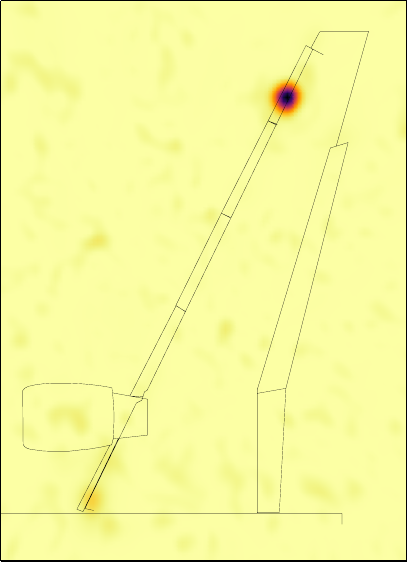}
    \caption{$f_{\nicefrac{1}{3}Oct} = 12\,\mathrm{kHz}$, \newline beamforming + DR}
  \end{subfigure}
    \begin{subfigure}{.31\textwidth}
    \includegraphics[height = 1.05\textwidth]{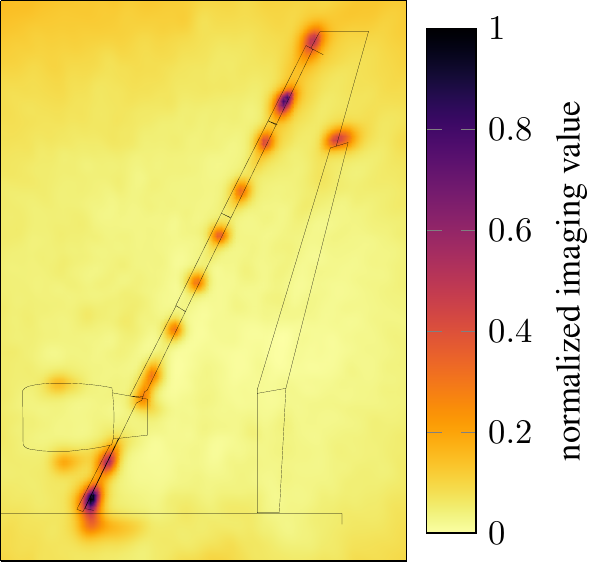}
    \caption{$f_{\nicefrac{1}{3}Oct} = 12\,\mathrm{kHz}$, \newline factorization method}
  \end{subfigure}
  \ \vspace{0.5cm}  \\ 
  \begin{subfigure}{.31\textwidth}
    \includegraphics[height = 1.05\textwidth]{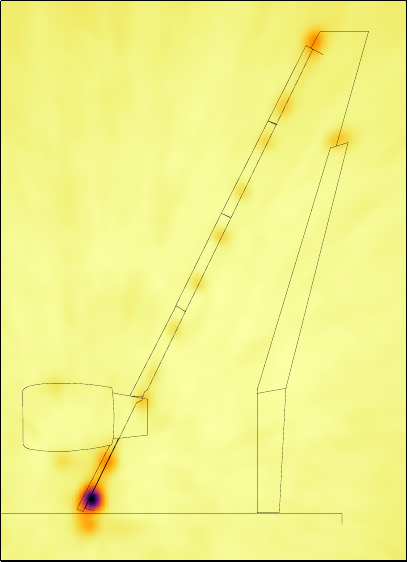}
    \caption{$f_{\nicefrac{1}{3}Oct} = 16\,\mathrm{kHz}$, \newline beamforming}
  \end{subfigure}
  \begin{subfigure}{.31\textwidth}
    \includegraphics[height = 1.05\textwidth]{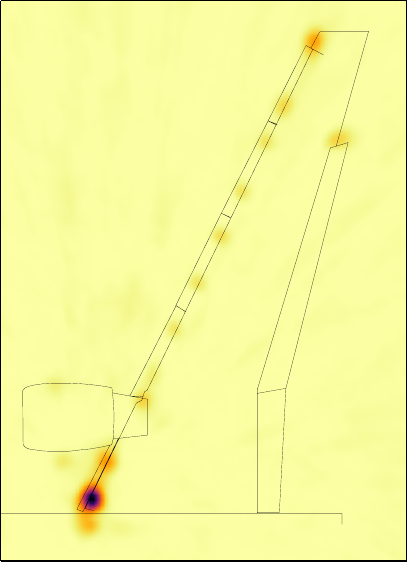}
    \caption{$f_{\nicefrac{1}{3}Oct} = 16\,\mathrm{kHz}$, \newline beamforming + DR}
  \end{subfigure}
  \begin{subfigure}{.31\textwidth}
    \includegraphics[height = 1.05\textwidth]{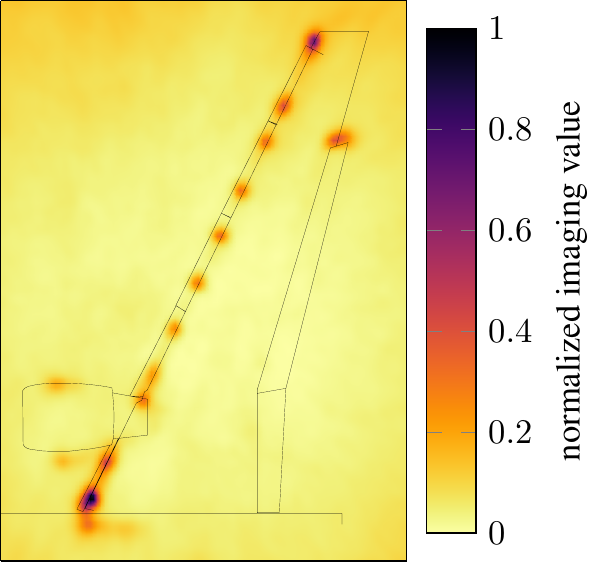}
    \caption{$f_{\nicefrac{1}{3}Oct} = 16\,\mathrm{kHz}$, \newline factorization method}
  \end{subfigure}
  \caption{
    Aeroacoustic source reconstructions for a Dornier-728 half-model,
    measured at the cryogenic wind tunnel in Cologne
    (DNW-KKK)~\cite{Ahlefeldt2013}. Source powers are shown on a
    cross-section through the wing and normalized to $[0,1]$.
    The Mach number is $0.125$, and the number of microphones $134$.}
  \label{fig:experimental_results}
\end{figure}

The results are shown for three third octave bands
($f_{\nicefrac{1}{3}Oct}=8$, $12$, and $16$~kHz) in Figure~\ref{fig:experimental_results}. 
According to \eqref{eq:nof_in_band} those frequency bands
contain $16$, $23$, and $32$ frequencies. 
At~$8$~kHz the factorization method provides a significant improvement
in spatial resolution when compared to conventional beamforming with
and without diagonal removal.
On the other hand, the reconstructions of the factorization method
contain more low frequent artifacts in regions apart from the wing,
where no sources are to be expected (e.g. in the top left corner of
the source maps).
At~$12$~kHz and~$16$~kHz only one or two dominating sources are
recovered by the conventional beamformers, while the factorization
method reconstructs regularly spaced sources on the leading edge of
the wing and a localized source at the end of the wing flaps.
All main source mechanisms are visible in the source maps of the
factorization method. 
The processing time for the factorization method is comparable to that
of the conventional beamformers.

  Recalling the equivalence of~\eqref{eq:imagingfunc_fac_discrete2}
  and \eqref{eq:ICapon} we conclude from \eqref{eq:wcapon_def} that
  the imaging functional of the factorization method (or equivalently
  of Capon's method) gives the smallest values among all beamformers
  maintaining the unit gain constraint~\eqref{eq:UnitGain}. 
  This cannot be seen in Figure~\ref{fig:experimental_results}
  directly, because all averaged imaging values have been normalized
  to the interval~$[0.1]$.
  However, it explains the higher resolution of the factorization
  method at $8$~kHz when compared to the conventional beamformers
  from~\eqref{eq:ConvBeamformer}--\eqref{eq:ConvBeamformerDR}.
  Using the orthonormal
  eigen\-system~$(\lambda_j, \ul{\psi}_j)_{1\leq j\leq M}$ of
  $\Cmat$ and denoting by $0< \npos \leq M$ the number of positive
  eigenvalues as before, the indicator functional $\Icbf(\bfz)$ from
  \eqref{eq:ConvBeamformer} can be written as
\begin{equation}
  \label{eq:imagingfunc_cbf_discrete} 
  \Icbf(\bfz)
  \,=\, \frac{1}{|\gvec(\bfz) |^4} 
  \sum_{j=1}^{\npos} \lambda_j
  \abs{\left\langle \gvec(\bfz), \ul{\psi}_j \right\rangle_{2}}^2 \,,
  \qquad \bfz\in\widetilde\Omega \,.
\end{equation}
Comparing this with \eqref{eq:imagingfunc_fac_discrete1} shows that
source components corresponding to large eigenvalues of the
correlation matrix dominate the reconstruction that is obtained by the
conventional beamformers, while the factorization method emphasizes on
source components related to smaller eigenvalues of the correlation
matrix. 
This, and our theoretical results from
Section~\ref{sec:factorization_aeroacoustics}, might be used to
explain the larger number of reconstructed source components that is
obtained by the factorization method in
Figure~\ref{fig:experimental_results} at $12$ and $16$~kHz. 
On the other hand, small eigenvalues of the correlation matrix do not
affect the stability of the conventional beamformers, while they lead
to instability of the factorization method (without further
regularization), which yields artifacts in reconstructions from
noisy data.

\section{Conclusions}
\label{sec:conclusions}
In this article, we have demonstrated that a variant of the
factorization method from inverse scattering theory can be used to
reconstruct the support of aeroacoustic random sources from
correlations of observed pressure fluctuations.
We established a rigorous characterization of the support of the
source power function in terms of the correlation data.

Moreover we have shown that the factorization method is closely
related to Capon's method, which is a well-established beamforming
method in experimental aeroacoustics. 
This unexpected relationship gives a new theoretically rigorous
interpretation of the reconstructions that are obtained by Capon's
method.
Our results basically say that Capon's method recovers the correct
support of the source power function, at least when the
latter is locally strictly positive.

\section*{Acknowledgments}
We are grateful to Thomas Ahlefeldt (DLR G\"ottingen) for sharing the
measurement data from~\cite{Ahlefeldt2013} and the picture of the
measurement setup in Figure~\ref{fig:foto_experimental} with us.
The first authors work was partially supported by the Deutsche
Forschungsgemeinschaft (DFG, German Research Foundation) -- Project-ID
258734477 -- SFB 1173. 
The second author would like to thank Thorsten Hohage
(Universit\"at G\"ottingen) for helpful discussions regarding the
theoretical analysis.
This research was initiated at the Oberwolfach Workshop
``Computational Inverse Problems for Partial Differential Equations''
in December 2020 organized by Liliana Borcea, Thorsten Hohage, and
Barbara Kaltenbacher.
We thank the organizers and the Oberwolfach Research Institute for
Mathematics (MFO) for the kind invitation. 
\FloatBarrier

{\small
  \bibliographystyle{abbrv}
  \bibliography{references}

\begin{thebibliography}{10}

\bibitem{Ahlefeldt2013}
T.~Ahlefeldt.
\newblock Aeroacoustic measurements of a scaled half-model at high reynolds
  numbers.
\newblock {\em {AIAA} Journal}, 51(12):2783--2791, 2013.

\bibitem{BaoChePei16}
G.~Bao, C.~Chen, and P.~Li.
\newblock Inverse random source scattering problems in several dimensions.
\newblock {\em SIAM/ASA J. Uncertain. Quantif.}, 4(1):1263--1287, 2016.

\bibitem{BaoChoLiZho14}
G.~Bao, S.-N. Chow, P.~Li, and H.~Zhou.
\newblock An inverse random source problem for the {H}elmholtz equation.
\newblock {\em Math. Comp.}, 83(285):215--233, 2014.

\bibitem{BilKin76}
J.~Billingsley and R.~Kinns.
\newblock The acoustic telescope.
\newblock {\em Journal of Sound and Vibration}, 48(4):485--510, 1976.

\bibitem{Blacodon2004}
D.~Blacodon and G.~Elias.
\newblock Level estimation of extended acoustic sources using a parametric
  method.
\newblock {\em Journal of Aircraft}, 41(6):1360--1369, 2004.

\bibitem{Borcea11}
L.~Borcea, J.~Garnier, G.~Papanicolaou, and C.~Tsogka.
\newblock Coherent interferometric imaging, time gating and beamforming.
\newblock {\em Inverse Problems}, 27(6):065008, 17, 2011.

\bibitem{Brooks2006}
T.~F. Brooks and W.~M. Humphreys.
\newblock A deconvolution approach for the mapping of acoustic sources
  ({DAMAS}) determined from phased microphone arrays.
\newblock {\em Journal of Sound and Vibration}, 294(4-5):856--879, 2006.

\bibitem{BruHanPid01}
M.~Br\"{u}hl, M.~Hanke, and M.~Pidcock.
\newblock Crack detection using electrostatic measurements.
\newblock {\em M2AN Math. Model. Numer. Anal.}, 35(3):595--605, 2001.

\bibitem{Capon1969}
J.~Capon.
\newblock High-resolution frequency-wavenumber spectrum analysis.
\newblock {\em Proceedings of the {IEEE}}, 57(8):1408--1418, 1969.

\bibitem{Colton2019}
D.~Colton and R.~Kress.
\newblock {\em Inverse acoustic and electromagnetic scattering theory},
  volume~93 of {\em Applied Mathematical Sciences}.
\newblock Springer, Cham, fourth edition, 2019.

\bibitem{Cox1987}
H.~Cox, R.~Zeskind, and M.~Owen.
\newblock Robust adaptive beamforming.
\newblock {\em IEEE Transactions on Acoustics, Speech, and Signal Processing},
  35(10):1365--1376, 1987.

\bibitem{Dev79}
A.~Devaney.
\newblock The inverse problem for random sources.
\newblock {\em Journal of Mathematical Physics}, 20(8):1687--1691, 1979.

\bibitem{Engl1996}
H.~W. Engl, M.~Hanke, and A.~Neubauer.
\newblock {\em Regularization of inverse problems}, volume 375 of {\em
  Mathematics and its Applications}.
\newblock Kluwer Academic Publishers Group, Dordrecht, 1996.

\bibitem{GebHan05}
B.~Gebauer, M.~Hanke, A.~Kirsch, W.~Muniz, and C.~Schneider.
\newblock A sampling method for detecting buried objects using electromagnetic
  scattering.
\newblock {\em Inverse Problems}, 21(6):2035--2050, 2005.

\bibitem{GebHyv07}
B.~Gebauer and N.~Hyv\"{o}nen.
\newblock Factorization method and irregular inclusions in electrical impedance
  tomography.
\newblock {\em Inverse Problems}, 23(5):2159--2170, 2007.

\bibitem{HarUlr13}
B.~Harrach and M.~Ullrich.
\newblock Monotonicity-based shape reconstruction in electrical impedance
  tomography.
\newblock {\em SIAM J. Math. Anal.}, 45(6):3382--3403, 2013.

\bibitem{Hohage2020}
T.~Hohage, H.-G. Raumer, and C.~Spehr.
\newblock Uniqueness of an inverse source problem in experimental
  aeroacoustics.
\newblock {\em Inverse Problems}, 36(7):075012, 18, 2020.

\bibitem{Johnson1993}
D.~H. Johnson and D.~E. Dudgeon.
\newblock {\em Array Signal Processing}.
\newblock P T R Prentice Hall, Englewood Cliffs, NJ, 1993.

\bibitem{Kirsch1998}
A.~Kirsch.
\newblock Characterization of the shape of a scattering obstacle using the
  spectral data of the far field operator.
\newblock {\em Inverse Problems}, 14(6):1489--1512, 1998.

\bibitem{Kirsch1999}
A.~Kirsch.
\newblock Factorization of the far-field operator for the inhomogeneous medium
  case and an application in inverse scattering theory.
\newblock {\em Inverse Problems}, 15(2):413--429, 1999.

\bibitem{Kirsch2017}
A.~Kirsch.
\newblock Remarks on the {B}orn approximation and the factorization method.
\newblock {\em Appl. Anal.}, 96(1):70--84, 2017.

\bibitem{Kirsch2008}
A.~Kirsch and N.~Grinberg.
\newblock {\em The factorization method for inverse problems}, volume~36 of
  {\em Oxford Lecture Series in Mathematics and its Applications}.
\newblock Oxford University Press, Oxford, 2008.

\bibitem{KusSyl03}
S.~Kusiak and J.~Sylvester.
\newblock The scattering support.
\newblock {\em Comm. Pure Appl. Math.}, 56(11):1525--1548, 2003.

\bibitem{LiHelLi20}
J.~Li, T.~Helin, and P.~Li.
\newblock Inverse random source problems for time-harmonic acoustic and elastic
  waves.
\newblock {\em Comm. Partial Differential Equations}, 45(10):1335--1380, 2020.

\bibitem{Li2003}
J.~Li, P.~Stoica, and Z.~Wang.
\newblock On robust capon beamforming and diagonal loading.
\newblock {\em IEEE Transactions on Signal Processing}, 51(7):1702--1715, 2003.

\bibitem{Lorenz2005}
R.~Lorenz and S.~Boyd.
\newblock Robust minimum variance beamforming.
\newblock {\em IEEE Transactions on Signal Processing}, 53(5):1684--1696, 2005.

\bibitem{Merino2019}
R.~Merino-Mart{\'\i}nez, P.~Sijtsma, M.~Snellen, T.~Ahlefeldt, J.~Antoni, C.~J.
  Bahr, D.~Blacodon, D.~Ernst, A.~Finez, S.~Funke, et~al.
\newblock A review of acoustic imaging methods using phased microphone arrays.
\newblock {\em CEAS Aeronautical Journal}, 10(1):197--230, 2019.

\bibitem{NocWri99}
J.~Nocedal and S.~J. Wright.
\newblock {\em Numerical optimization}.
\newblock Springer Series in Operations Research. Springer-Verlag, New York,
  1999.

\bibitem{RaumerPhD}
H.-G. Raumer.
\newblock {\em Analysis of an Inverse Source Problem with Correlation Data in
  Experimental Aeroacoustics}.
\newblock PhD thesis, Deutsches Zentrum f{\"u}r Luft- und Raumfahrt e.V., 2021.

\bibitem{Raumer2021}
H.-G. Raumer, C.~Spehr, T.~Hohage, and D.~Ernst.
\newblock Weighted data spaces for correlation-based array imaging in
  experimental aeroacoustics.
\newblock {\em Journal of Sound and Vibration}, 494:115878, 2021.

\bibitem{Shan1985}
T.-J. Shan and T.~Kailath.
\newblock Adaptive beamforming for coherent signals and interference.
\newblock {\em IEEE Transactions on Acoustics, Speech, and Signal Processing},
  33(3):527--536, 1985.

\bibitem{Sijtsma2004}
P.~Sijtsma.
\newblock Experimental techniques for identification and characterisation of
  noise sources.
\newblock Technical Report NLR-TP-2004-165, National Aerospace Laboratory NLR,
  2004.

\bibitem{Sijtsma2007}
P.~Sijtsma.
\newblock {CLEAN} based on spatial source coherence.
\newblock {\em International Journal of Aeroacoustics}, 6(4):357--374, 2007.

\bibitem{Underbrink2002}
J.~R. Underbrink.
\newblock {\em Aeroacoustic Phased Array Testing in Low Speed Wind Tunnels},
  pages 98--217.
\newblock Springer Berlin Heidelberg, Berlin, Heidelberg, 2002.

\bibitem{Van88}
B.~D. Van~Veen and K.~M. Buckley.
\newblock Beamforming: A versatile approach to spatial filtering.
\newblock {\em IEEE assp magazine}, 5(2):4--24, 1988.

\bibitem{Welch1967}
P.~Welch.
\newblock The use of fast {F}ourier transform for the estimation of power
  spectra: a method based on time averaging over short, modified periodograms.
\newblock {\em IEEE Transactions on audio and electroacoustics}, 15(2):70--73,
  1967.

\bibitem{Yardibi2008}
T.~Yardibi, J.~Li, P.~Stoica, and L.~N. Cattafesta~III.
\newblock Sparsity constrained deconvolution approaches for acoustic source
  mapping.
\newblock {\em The Journal of the Acoustical Society of America},
  123(5):2631--2642, 2008.

\end{thebibliography}
}

\end{document}